\newenvironment{breakablealgorithm}
{
	\begin{center}
		\refstepcounter{algorithm}
		\hrule height.8pt depth0pt \kern2pt
		\renewcommand{\caption}[2][\relax]{
			{\raggedright\textbf{\ALG@name~\thealgorithm} ##2\par}
			\ifx\relax##1\relax
			\addcontentsline{loa}{algorithm}{\protect\numberline{\thealgorithm}##2}
			\else
			\addcontentsline{loa}{algorithm}{\protect\numberline{\thealgorithm}##1}
			\fi
			\kern2pt\hrule\kern2pt
		}
	}{
		\kern2pt\hrule\relax
	\end{center}
}
\newtheorem{theorem}{Theorem}
\newtheorem{proposition}{Proposition}
\begin{document}
\title{\bf Estimation of sparse Gaussian graphical models with hidden clustering structure\footnotemark[1]}
\author{Meixia Lin\footnotemark[2], \quad Defeng Sun\footnotemark[3], \quad Kim-Chuan Toh\footnotemark[4], \quad Chengjing Wang\footnotemark[5]}

\date{April 16, 2020}
\maketitle

\renewcommand{\thefootnote}{\fnsymbol{footnote}}
%\footnotetext[1]{{\bf Funding:}}
\footnotetext[2]{Department of Mathematics, National University of Singapore, Singapore ({\tt lin\_meixia@u.nus.edu}).}
\footnotetext[3]{Department of Applied Mathematics, The Hong Kong Polytechnic University, Hung Hom, Hong Kong ({\ttfamily defeng.sun@polyu.edu.hk}). {This author is supported by Hong Kong Research Grant Council grant PolyU153014/18p and Shenzhen Research Institute
of Big Data, Shenzhen 518000 grant 2019ORF01002.}}
\footnotetext[4]{Department of Mathematics and Institute of Operations Research and Analytics, National University of Singapore, Singapore ({\tt mattohkc@nus.edu.sg}). The research of this author is partially supported
by the Academic Research Fund of the Ministry of Education of Singapore under grant number R-146-000-257-112.
}
\footnotetext[5]{Corresponding author. School of Mathematics, Southwest Jiaotong University, No. 999, Xian Road, West
Park, High-tech Zone, Chengdu 611756, China ({\tt renascencewang@hotmail.com}).}
\renewcommand{\thefootnote}{\arabic{footnote}}

\begin{abstract}
	Estimation of Gaussian graphical models is important in natural science when modeling the statistical relationships between variables in the form of a graph. The sparsity and clustering structure of the concentration matrix is enforced to reduce model complexity and describe inherent regularities. We propose a model to estimate the sparse Gaussian graphical models with hidden clustering structure, which also allows additional linear constraints to be imposed on the concentration matrix. We design an efficient two-phase algorithm for solving the proposed model. We develop a symmetric Gauss-Seidel based alternating direction method of the multipliers (sGS-ADMM) to generate an initial point to warm-start the second phase algorithm, which is a proximal augmented Lagrangian method (pALM), to get a solution with high accuracy. Numerical experiments on both synthetic data and real data demonstrate the good performance of our model, as well as the efficiency and robustness of our proposed algorithm.
\end{abstract}

\medskip
\noindent
{\bf Keywords:} sparse Gaussian graphical model, clustered lasso regularizer, proximal augmented Lagrangian method
\\[5pt]
{\bf AMS subject classification:} 90C06, 90C25, 90C90

\section{Introduction}
\label{sec: introduction}
Let $z\in \mathbb{R}^n$ be a random vector following a multivariate Gaussian distribution ${\cal N}(0,\Sigma)$ with an  unknown nonsingular covariance matrix $\Sigma$. Gaussian graphical models \cite{lauritzen1996graphical} estimate the concentration matrix $\Sigma^{-1}$ from a sample covariance matrix of $z$. It is known that $(\Sigma^{-1})_{ij}=0$ if and only if $z_i$ and $z_j$ are conditionally independent, given all the other variables. The Gaussian graphical model can be represented by an undirected graph ${\cal G}=({\cal V},{\cal E})$, where the vertices ${\cal V}$ contain  $n$ coordinates and the edges ${\cal E}=(e_{ij})_{1\leq i<j\leq n}$ describe the conditional independence relationships among $z_1,\cdots,z_n$. There is no edge between $z_i$ and $z_j$ if and only if $(\Sigma^{-1})_{ij}=0$.

To detect nonzero elements in the concentration matrix $\Sigma^{-1}$, researchers have proposed sparse Gaussian graphical models \cite{yuan2007model,banerjee2008model}. Given a sample covariance matrix $C\in{\mathbb S} ^n$, the sparse Gaussian graphical model attempts to estimate the concentration matrix $X^*:=\Sigma^{-1}$ by solving the following $\ell_1$-regularized log-likelihood minimization problem:
\begin{align}
\min_{X\succeq 0}  \displaystyle \ \Big\{\langle C, X\rangle - \log \det (X) + \rho \sum_{ i<j} |X_{ij}|\Big\},\label{sparse}
\end{align}
where $\rho$ is a given positive parameter, $\langle C, X\rangle$ is the standard trace inner product between $C$ and $X$, and $X\succeq 0$ means that $X\in \mathbb{S}^n$ is positive semidefinite. We adopt the convention that $\log 0:=-\infty$. The $\ell_1$-norm penalty, which is motivated by the lasso idea \cite{tibshirani1996regression}, enforces element-wise sparsity on $X$. There are many methods for solving the sparse Gaussian graphical model, such as the well-known GLasso algorithm \cite{friedman2008sparse}, the Newton-CG primal proximal point algorithm \cite{wang2010solving}, and QUIC \cite{hsieh2014quic}.

The concentration matrix may have additional structures other than sparsity. For example, Honorio et al. in \cite{honorio2009sparse} enforce the local constancy to find connectivities between two close or distant clusters of variables; H{\o}jsgaard and Lauritzen in \cite{hojsgaard2005restricted,hojsgaard2008graphical} propose the restricted concentration models where parameters associated with edges or vertices of the same class are restricted to being identical; Duchi et al. in \cite{duchi2012projected} penalize certain groups of edges together. In all these models, the clusters of the coordinates are assumed to be known. However, in many real applications like the gene expression in cancer data \cite{hughes2000functional,yu2017clustering}, the group/cluster information may be unknown in advance. The authors in \cite{marlin2009sparse} propose a two stage method for learning sparse Gaussian graphical models with unknown block structure. They propose a variational Bayes algorithm to learn the block structure in the first stage, then estimate the concentration matrix by using the block $\ell_1$ method in the second stage.

Here we aim to estimate the sparse concentration matrix and uncover the hidden clustering structure of the coordinates simultaneously. Note that in the context of a linear regression model where the regression coefficients are expected to be clustered into groups, the clustered lasso regularizer \cite{bondell2008simultaneous,lin2019efficient,petry2011pairwise,she2010sparse} has been widely used. We borrow the idea of the regularization term to discover the sparsity and unknown clustering structure in the Gaussian graphical models. Thus we modify the sparse Gaussian graphical model \eqref{sparse} as follows:
\begin{align}
\min_{X\succeq 0}  \displaystyle \ \Big\{\langle C, X\rangle - \log \det (X) +\rho \sum_{ i<j} |X_{ij}|+
\lambda \sum_{i<j} \sum_{s<t} |X_{ij} - X_{st}|\Big\},\label{uncon_P}
\end{align}
where $\rho,\lambda>0$ are given parameters. In the above model, the penalty on the pairwise differences is to force those entries of the concentration matrix associated with the same cluster of underlying random variables to be the same. In that way, the clustering structure of the random variables can then be discovered. In some more complicated cases, the conditional independence pattern may be partially known. To deal with those cases, one can  impose additional constraints on $X$ to get the following model:
\begin{align}
\min_{X\succeq 0}  \displaystyle \ \Big\{\langle C, X\rangle - \log \det (X) +\rho \sum_{ i<j} |X_{ij}|+
\lambda \sum_{i<j} \sum_{s<t} |X_{ij} - X_{st}| \bigm\vert  X_{ij}=0,\ (i,j)\in{\cal J}
\Big\},\label{pattern_P}
\end{align}
where ${\cal J}$ is the set of pairs of nodes $(i,j)$ such that $z_i$ and $z_j$ are known to be conditionally independent.

Motivated by the above discussions,  in this paper, we consider a more general problem which allows for general linear equality constraints to be imposed on $X$, i.e.,
\begin{equation}\label{P}
\min_{X\in{\mathbb S}^n} \displaystyle \  \Big\{   \langle C,X\rangle - \mu\log \det (X) +
\underbrace{\rho \sum_{ i<j} |X_{ij}|+
\lambda \sum_{i<j} \sum_{s<t} |X_{ij} - X_{st}|}_{Q(X)} \bigm\vert {\cal A} X = b,\  X\succeq 0\Big\},\tag{P}
\end{equation}
where ${\cal A}:{\mathbb S}^n\to \mathbb{R}^m$ is a given linear map, $b\in\mathbb{R}^m$ is a given vector, $\mu,\rho,\lambda>0$ are given parameters. Solving the problem \eqref{P} with a large $n$ is a challenging task due to the combination effects of the $n\times n$ positive semidefinite variable and the complicated regularization term together with the linear constraints. At a first glance, it would appear to be extremely expensive to evaluate the second part of $Q(X)$ as it involves approximately $n^4/8$ terms, thus it becomes unthinkable to even solve \eqref{P} for the case when $n$ is large. Fortunately, as we shall see later, the symmetric nature of the summation allows us to carry out the evaluation of the regularization term in $O(n^2\log n)$ operations. This reduction in the computation cost makes it possible to solve the problem \eqref{P} for large $n$.

Our contributions in this paper can be summarized in three parts. Firstly, we propose the model \eqref{P} to estimate the sparse Gaussian graphical model with hidden clustering structure, which also allows additional linear constraints to be imposed on the concentration matrix. As far as we are aware of, this is the first model that attempts to estimate the concentration matrix and uncover the hidden clustering structure in the variables simultaneously. Secondly, we design an efficient two-phase algorithm for solving the dual of \eqref{P}. We develope a symmetric Gauss-Seidel based alternating direction method of the multipliers (sGS-ADMM) to generate an initial point to warm-start the second phase algorithm, which is a proximal augmented Lagrangian method (pALM), to get a solution with high accuracy. For solving the pALM subproblems, we use the semismooth Newton method where the sparsity and clustering structure is carefully analysed and exploited in the underlying generalized Jacobians to reduce the computational cost in each semismooth Newton iteration. Thirdly, we conduct comprehensive numerical experiments on both synthetic data and real data to demonstrate the performance of our model, as well as the efficiency and robustness of our proposed algorithm. The numerical results show that our model can rather successfully estimate the concentration matrix as well as uncovering its clustering structure.

The remaining parts of the paper are organized as follows. In Section \ref{sec: problem_formulation}, we state the problem setup and some related results in the literature. In Section \ref{sec: algorithms}, we describe the proposed two-phase algorithm for solving our model. In Section \ref{sec: Numerical experiments}, we present the numerical results. Finally,  in Section \ref{sec: conclusion}, we make some concluding remarks.

Throughout the paper, we use ${\rm diag}(X)$ to denote a vector consisting of the diagonal entries of a matrix $X$ and ${\rm Diag}(x)$ to denote a diagonal matrix whose diagonal is given by a vector $x$. For any matrix $X\in \mathbb{R}^{n\times n}$, $\|X\|$ denotes the  Frobenius norm of $X$.

\section{Problem setup and preliminaries}
\label{sec: problem_formulation}
In this section, we set up the problem and present some related properties of the regularization term $Q(\cdot)$ and the function $\log\det(\cdot)$, respectively.

\subsection{Duality and optimality conditions}
\label{subsec: problem}
The minimization form for the dual of \eqref{P} is given by
\begin{align*} \label{D}
\min_{y\in\mathbb{R}^m,Z\in{\mathbb S}^n,S\in\mathbb{S}^{n} } & \displaystyle \   -\langle b,y\rangle  - \mu\log \det (Z) + Q^*(-S)-n\mu +n\mu \log\mu\tag{D} \\
\mbox{s.t.} &\quad  C- {\cal A}^* y - Z - S= 0, \quad Z \succeq 0,
\end{align*}
where ${\cal A}^*:\mathbb{R}^m\rightarrow \mathbb{S}^n$ is the adjoint map of ${\cal A}$, $Q^*$ is the Fenchel conjugate function of $Q$ that is defined by $Q^*(Y)=\sup\{\langle Y,X\rangle -Q(X)\mid X\in \mathbb{S}^n\}$ for any $Y\in \mathbb{S}^n$. The KKT system associated with \eqref{P} and  \eqref{D}  is given as
\begin{align}\label{KKT}
\left\{\begin{aligned}
&C- {\cal A}^* y - Z -  S= 0,\\
& XZ=\mu I_n, \ Z \succeq 0,\ X \succeq 0,\\
&0\in\partial Q(X)+S,\\
&{\cal A} X = b.
\end{aligned}\right.
\end{align}

Throughout this paper, we make the blanket assumption that ${\cal A}$ is surjective and the solution set to the KKT system \eqref{KKT} is nonempty. Since the objective function of \eqref{P} is strictly convex with respect to $X$, the optimal solution to \eqref{P} is unique, which we denote as $X^*$.

\subsection{The proximal mapping and Moreau envelope}
\label{subsec: moreauenvelope}
For a given closed convex function $f:{\cal H}\rightarrow \mathbb{R}$, where ${\cal H}$ is a finite dimensional real Euclidean space equipped with an inner product $\langle \cdot,\cdot\rangle$ and its induced norm $\|\cdot\|$. The Moreau envelope of $f$ at $x\in {\cal H}$ is defined as
\begin{align*}
{\rm E}_f(x)=\min_{y\in {\cal H}}  \displaystyle \  \Big\{ \frac{1}{2}\|y-x\|^2+f(y)\Big\}.
\end{align*}
The corresponding minimizer, which is called the proximal mapping of $f$ at $x$, is denoted as ${\rm Prox}_f(x)$. It is proved in \cite{moreau1965proximite,rockafellar1976monotone} that ${\rm Prox}_f(\cdot)$ is globally Lipschitz continuous with modulus $1$ and ${\rm E}_f(\cdot)$ is finite-valued, convex and continuously differentiable with
\begin{align*}
\nabla {\rm E}_f(x)=x-{\rm Prox}_f(x).
\end{align*}
The Moreau identity states that for any $t>0$, it holds that
\begin{align*}
{\rm Prox}_{tf}(x)+t{\rm Prox}_{f^*/t}(x/t)=x.
\end{align*}

\subsection{Results related ro the regularization term $Q(\cdot)$}
\label{subsec: clusteredlasso}
Let ${\cal B}:\mathbb{S}^n\rightarrow \mathbb{R}^{\bar{n}}$ be the linear map such that ${\cal B}X$ is the vector obtained from $X\in {\mathbb S}^n$ by concatenating the columns of the strictly upper triangular part of $X$ sequentially into a vector of dimension $\bar{n} := n(n-1)/2$. The adjoint ${\cal B}^*:\mathbb{R}^{\bar{n}}\rightarrow \mathbb{S}^n$ is such that ${\cal B}^*x$ is the operation of first putting the entries of the vector $x\in \mathbb{R}^{\bar{n}}$ into the strictly upper triangular part of an $n\times n$ matrix $X$, and then symmetrizing it. Denote
\begin{align*}
q(x) = \rho\| x\|_1 + \lambda p(x),\quad p(x) = \sum_{1\leq k < l \leq \bar{n}} |x_k-x_l|.
\end{align*}
Then it is obvious that
\begin{align*}
Q(X) = q({\cal B}X).
\end{align*}
The function $q(\cdot)$ is the clustered lasso regularizer in the context of the linear regression models,  which is studied in \cite{bondell2008simultaneous,lin2019efficient,petry2011pairwise,she2010sparse}. The associated conjugate function, proximal mapping and the corresponding generalized Jacobian of the proximal mapping has been carefully studied in \cite{lin2019efficient}. By making use of $q(\cdot)$, we have that for any $Y\in \mathbb{S}^n$,
\begin{align*}
Q^*(Y) =\sup_{X\in \mathbb{S}^n}\Big\{\sum_{i=1}^n X_{ii}Y_{ii}+2\langle {\cal B}X,{\cal B}Y\rangle -q({\cal B}X)\Big\}=\left\{\begin{aligned}
&q^*(2{\cal B}Y), && \mbox{if } {\rm diag}(Y)=0,\\
&+\infty,&& \mbox{otherwise,}
\end{aligned}
\right.
\end{align*}
and
\begin{align*}
{\rm Prox}_{Q}(Y) &=\underset{X\in \mathbb{S}^n}{\arg\min} \displaystyle \ \Big\{ \frac{1}{2}\|{\rm diag}(X)-{\rm diag}(Y)\|^2+\|{\cal B}X-{\cal B}Y\|^2+q({\cal B}X)
\Big\}\\
&={\rm Diag} \big({\rm diag}(Y)\big)+{\cal B}^* {\rm Prox}_q(2{\cal B}Y).
\end{align*}
Next we state the following proposition to compute $\partial{\rm Prox}_{Q}(Y)[H]$ for all $H\in \mathbb{S}^n$.
\begin{proposition}
	For any $Y\in \mathbb{S}^n$, it holds that
	\begin{align*}
	\partial{\rm Prox}_{Q}(Y)[H] = {\rm Diag} \big({\rm diag}(H)\big)+2{\cal B}^*\partial {\rm Prox}_q(2{\cal B}Y)[{\cal B}H],\quad \forall H\in \mathbb{S}^n,
	\end{align*}
	where $\partial{\rm Prox}_{Q}(Y)$ is the Clarke generalized Jacobian of ${\rm Prox}_Q(\cdot)$ at $Y$.
\end{proposition}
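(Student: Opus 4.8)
The plan is to differentiate the explicit formula ${\rm Prox}_{Q}(Y) = {\rm Diag}\big({\rm diag}(Y)\big) + {\cal B}^{*}{\rm Prox}_{q}(2{\cal B}Y)$ obtained above, exploiting the fact that the only nonsmooth ingredient is ${\rm Prox}_{q}$ and that it is sandwiched between the linear maps $2{\cal B}$ and ${\cal B}^{*}$. Write $\Phi(Y) = {\rm Diag}\big({\rm diag}(Y)\big)$ and $\Psi(Y) = {\cal B}^{*}{\rm Prox}_{q}(2{\cal B}Y) = \big({\cal B}^{*}\circ{\rm Prox}_{q}\circ(2{\cal B})\big)(Y)$, so that ${\rm Prox}_{Q} = \Phi + \Psi$. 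The map $\Phi$ is linear, hence continuously differentiable with $\nabla\Phi(Y) = \Phi$ for every $Y$; and since ${\rm Prox}_{q}$ is globally Lipschitz continuous (being a proximal mapping), $\Psi$ is locally Lipschitz, so its Clarke generalized Jacobian is well defined.

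First I would apply the sum rule. Because $\Phi$ is continuously differentiable, the Clarke sum rule holds with equality, giving $\partial{\rm Prox}_{Q}(Y) = \Phi + \partial\Psi(Y)$, and therefore $\partial{\rm Prox}_{Q}(Y)[H] = {\rm Diag}\big({\rm diag}(H)\big) + \partial\Psi(Y)[H]$ for all $H\in\mathbb{S}^{n}$. It then suffices to establish $\partial\Psi(Y) = 2{\cal B}^{*}\,\partial{\rm Prox}_{q}(2{\cal B}Y)\,{\cal B}$.

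Next I would peel off the two linear maps in $\Psi$ one at a time by the chain rule for the Clarke generalized Jacobian. For the outer composition with the linear (hence smooth) map ${\cal B}^{*}$, equality is available, so that $\partial\Psi(Y) = {\cal B}^{*}\,\partial\big({\rm Prox}_{q}\circ(2{\cal B})\big)(Y)$. For the inner composition of ${\rm Prox}_{q}$ with the linear map $2{\cal B}$, the general Clarke chain rule only guarantees the inclusion $\partial\big({\rm Prox}_{q}\circ(2{\cal B})\big)(Y)\subseteq 2\,\partial{\rm Prox}_{q}(2{\cal B}Y)\,{\cal B}$; the reverse inclusion, which is what turns this into the asserted equality, is exactly the delicate point. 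I would secure it by verifying that ${\cal B}:\mathbb{S}^{n}\to\mathbb{R}^{\bar{n}}$ is surjective — given any $x\in\mathbb{R}^{\bar{n}}$, the symmetric matrix with strictly upper triangular part $x$ and zero diagonal maps to $x$ under ${\cal B}$ — so that $2{\cal B}$ is surjective and the Clarke chain rule holds with equality for a surjective inner linear map.

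Combining the two stages yields $\partial\Psi(Y) = 2{\cal B}^{*}\,\partial{\rm Prox}_{q}(2{\cal B}Y)\,{\cal B}$, and substituting into the sum-rule identity gives the claimed formula. The main obstacle is precisely the passage from inclusion to equality in the inner chain rule: without surjectivity of ${\cal B}$ one could only assert ``$\subseteq$'', so the argument hinges on this structural property of ${\cal B}$, the outer composition and the sum rule being routine consequences of smoothness of the respective factors. A welcome payoff of the reduction is that the right-hand side is expressed entirely through $\partial{\rm Prox}_{q}$, whose explicit elements are available from the study of the clustered lasso proximal mapping in \cite{lin2019efficient}, which is what makes the generalized Jacobian of ${\rm Prox}_{Q}$ usable inside the semismooth Newton iterations.
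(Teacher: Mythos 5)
Your proposal is correct and takes essentially the same route as the paper: the paper uses the same decomposition ${\rm Prox}_{Q}(Y) = {\rm Diag}\big({\rm diag}(Y)\big)+{\cal B}^*{\rm Prox}_q(2{\cal B}Y)$ and then disposes of the generalized-Jacobian calculus in one line by citing \cite[Example 2.5]{hiriart1984generalized}, which is precisely the equality form of the Clarke chain rule for compositions with linear maps that you prove by hand. Your identification of the surjectivity of ${\cal B}$ as the structural property that upgrades the chain-rule inclusion to an equality is exactly the content hidden in that citation, so your argument simply makes the paper's one-line proof self-contained.
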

\begin{proof}
    The equality follows from \cite[Example 2.5]{hiriart1984generalized}.
\end{proof}

Here we present some results on the clustered lasso regularizer $p(\cdot)$ that are taken from \cite{lin2019efficient}. Denote ${\cal D}=\{x\in \mathbb{R}^{\bar{n}} \mid Bx\geq 0\}$, where $Bx=[x_1-x_2,x_2-x_3,\cdots,x_{\bar{n}-1}-x_{\bar{n}}]^T\in \mathbb{R}^{\bar{n}-1}$. Let $x^\downarrow$ be the vector whose components are those of $x$ sorted in a non-increasing order, that is $x^\downarrow_1\geq \cdots\geq x^\downarrow_{\bar{n}}$. Then we have the following proposition, which describes an efficient way for evaluating ${\rm Prox}_{q}(\cdot)$ .
\begin{proposition} \label{prop: proximalmapping_p}
	(a) For any $x\in \mathbb{R}^{\bar{n}}$, it can be proved that
	\[
	p(x)= \langle w,x^\downarrow \rangle,
	\]
	where $w\in \mathbb{R}^{\bar{n}}$ is defined by $w_k = \bar{n}-2k+1$, $k=1,\ldots,\bar{n}$. Thus the computational cost of evaluating $p(\cdot)$ can be reduced from $O(\bar{n}^2)$ to $O(\bar{n}\log \bar{n})$.
	\\[5pt]
	(b) For any given $y\in \mathbb{R}^{\bar{n}}$, let $P_y\in \mathbb{R}^{\bar{n}\times \bar{n}}$ be a permutation matrix such that $P_y y$ is sorted in a non-increasing order. Then the proximal mapping of $\lambda p$ at $y$ can be computed as
	\begin{align*}
	{\rm Prox}_{\lambda p}(y) = P_y^T \Pi_{\cal D}(P_yy-\lambda w),
	\end{align*}
	where $\Pi_{\cal D}(\cdot)$ (the metric projection onto ${\cal D}$) can be computed by the pool-adjacent-violators algorithm \cite{best1990active} in $O(\bar{n})$ operations.
	\\[5pt]
	(c) For any $y\in \mathbb{R}^{\bar{n}}$, the proximal mapping of $q$ at $y$ can be computed as
	\begin{align*}
	{\rm Prox}_{q}(y)
	= {\rm Prox}_{\rho\|\cdot\|_1} ({\rm Prox}_{\lambda p}(y)) = {\rm sign}({\rm Prox}_{\lambda p}(y))\circ \max(|{\rm Prox}_{\lambda p}(y)|-\rho,0).
	\end{align*}
	where the sign function, the absolute value and the maximum value are taken component-wise.
\end{proposition}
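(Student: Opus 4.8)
The plan is to treat the three parts in order, reducing each to a structural property of the sum-of-absolute-differences function $p$.

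For part (a), I would first note that $p(x)=\sum_{k<l}|x_k-x_l|$ is invariant under permutations of the entries of $x$, so $p(x)=p(x^\downarrow)$. Once the entries are sorted non-increasingly, every difference $x^\downarrow_k-x^\downarrow_l$ with $k<l$ is nonnegative, the absolute values drop, and $p(x)=\sum_{k<l}(x^\downarrow_k-x^\downarrow_l)$. It then remains a bookkeeping step to count, for each fixed index $j$, how often $x^\downarrow_j$ occurs with a $+$ sign (as the smaller index, $\bar{n}-j$ times) versus a $-$ sign (as the larger index, $j-1$ times), giving the coefficient $w_j=(\bar{n}-j)-(j-1)=\bar{n}-2j+1$. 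The cost claim is then immediate, since one only needs a sort costing $O(\bar{n}\log\bar{n})$ followed by an inner product costing $O(\bar{n})$.

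For part (b), the structural fact I would establish first is that the minimizer $u^\star$ of $\frac{1}{2}\|u-y\|^2+\lambda p(u)$ inherits the ordering of $y$: if $y_i\geq y_j$ then $u^\star_i\geq u^\star_j$. This follows from a swapping argument: because $p$ is permutation-invariant, exchanging two out-of-order coordinates of a candidate leaves $p(u)$ unchanged while changing the quadratic fidelity term by exactly $2(u_j-u_i)(y_i-y_j)$, which is strictly positive in the out-of-order case, contradicting optimality. Consequently one may restrict the minimization to vectors $u$ with $P_y u\in{\cal D}$ without loss. On this restricted set $p(u)=\langle w,P_y u\rangle$ by part (a), and substituting $v=P_y u$ together with the orthogonality of $P_y$ recasts the objective as $\frac{1}{2}\|v-(P_y y-\lambda w)\|^2$ up to an additive constant, minimized over $v\in{\cal D}$. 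This is precisely the projection $\Pi_{\cal D}(P_y y-\lambda w)$; undoing the permutation gives the stated formula, and the projection onto the monotone cone ${\cal D}$ is computed by the pool-adjacent-violators algorithm in $O(\bar{n})$ operations.

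For part (c), the second equality is the standard soft-thresholding formula for ${\rm Prox}_{\rho\|\cdot\|_1}$, so the real content is the composition identity ${\rm Prox}_q={\rm Prox}_{\rho\|\cdot\|_1}\circ{\rm Prox}_{\lambda p}$. I would set $z^\star={\rm Prox}_{\lambda p}(y)$ and $u^\star={\rm Prox}_{\rho\|\cdot\|_1}(z^\star)$, write the two first-order optimality conditions $y-z^\star\in\lambda\partial p(z^\star)$ and $z^\star-u^\star\in\rho\,\partial\|\cdot\|_1(u^\star)$, and add them to obtain $y-u^\star\in\lambda\partial p(z^\star)+\rho\,\partial\|\cdot\|_1(u^\star)$. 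Since the sum rule gives $\partial q=\rho\,\partial\|\cdot\|_1+\lambda\partial p$, to conclude $u^\star={\rm Prox}_q(y)$ it then suffices to verify the inclusion $\partial p(z^\star)\subseteq\partial p(u^\star)$. This is the main obstacle and the crux of the argument. I would prove it using the explicit description $\partial p(x)=\{\sum_{k<l}s_{kl}(e_k-e_l):s_{kl}\in[-1,1],\ s_{kl}={\rm sign}(x_k-x_l)\ \text{when } x_k\neq x_l\}$ together with the fact that the soft-thresholding map $t\mapsto{\rm sign}(t)\max(|t|-\rho,0)$ is nondecreasing. Monotonicity ensures that passing from $z^\star$ to $u^\star$ can only preserve strict orderings (so the forced signs $s_{kl}$ still match) and can only create new ties (where $s_{kl}$ becomes free in $[-1,1]$, so the old value remains admissible); hence any representation valid at $z^\star$ is still valid at $u^\star$, which yields the inclusion and completes the proof.
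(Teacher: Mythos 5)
Your proof is correct in substance, but note that it takes a route the paper itself does not: the paper gives no proof of this proposition at all, stating only that the results ``are taken from'' \cite{lin2019efficient}, so everything you wrote is content the paper delegates to that reference. Your arguments for (a) and (b) --- permutation invariance plus sign counting, and the order-preservation/swapping argument that reduces ${\rm Prox}_{\lambda p}$ to an isotonic projection onto ${\cal D}$ --- follow essentially the same lines as the proofs in \cite{lin2019efficient}, which in turn echo the OSCAR analysis of \cite{bondell2008simultaneous}. For (c), the cited reference appeals to a known prox-decomposition criterion (${\rm Prox}_{f+g}={\rm Prox}_f\circ{\rm Prox}_g$ whenever $\partial g(x)\subseteq\partial g({\rm Prox}_f(x))$ for all $x$, due to Yu), whereas you re-derive that criterion inline by adding the two first-order inclusions and then verify its hypothesis directly from the explicit description of $\partial p$ and the monotonicity of soft-thresholding; this makes your write-up self-contained at the cost of re-proving a general lemma, and your verification of the inclusion is sound (strict orderings at $u^\star$ can only come from strict orderings of the same sign at $z^\star$, and ties at $u^\star$ leave the multiplier free). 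One small repair is needed in (b): your swap argument gives a \emph{strictly} positive decrease $2(u_j-u_i)(y_i-y_j)$ only when $y_i>y_j$ strictly; when $y_i=y_j$ the change is zero, so to conclude that $P_y u^\star$ is non-increasing across tied entries of $y$ you should additionally invoke uniqueness of the proximal point (strong convexity of the objective): swapping two coordinates of $u^\star$ corresponding to tied $y$-entries produces another minimizer, which must therefore coincide with $u^\star$, forcing $u^\star_i=u^\star_j$. With that one-line fix the argument is complete.
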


Next we consider the generalized Jacobian of ${\rm Prox}_q(y)$, which denoted as ${\cal M}(y)$. The detailed derivation of ${\cal M}(\cdot)$ could be found in \cite{lin2019efficient}. Note that ${\rm Prox}_q(\cdot)$ is strongly semismooth on $\mathbb{R}^{\bar{n}}$ with respect to ${\cal M}$. In the implementation of our proppsed algorithm, we need an explicitly computable element in ${\cal M}(y)$ for any given $y\in \mathbb{R}^{\bar{n}}$. As discussed in \cite{lin2019efficient}, we denote
\begin{align*}
{\cal I}_{\cal D}(y):=\{i\mid B_i \Pi_{\cal D}(y)=0,\ i=1,\cdots,\bar{n}-1\},
\end{align*}
where $B_i$ is the $i$-th row of $B$. Then we define two diagonal matrices $\Sigma={\rm Diag}(\sigma)\in \mathbb{R}^{(\bar{n}-1)\times(\bar{n}-1)}$ with
\begin{align*}
\sigma_i=\left\{
\begin{array}{ll}
1, & \mbox{if $i\in {\cal I}_{\cal D}(P_y y -\lambda w)$,}\\[5pt]
0, & \mbox{otherwise,}
\end{array}\right.\quad \mbox{for $i=1,2,\cdots,\bar{n}-1$,}
\end{align*}
and $\Theta={\rm Diag}(\theta)\in \mathbb{R}^{\bar{n}\times \bar{n}}$ with
\begin{align*}
\theta_i = \left \{
\begin{array}{ll}
0, & \mbox{if $|{\rm Prox}_{\lambda p}(y)|_{i} \leq \rho $,}\\[2mm]
1, & \mbox{otherwise,}
\end{array}\right.
\quad \mbox{for $i=1,2,\cdots,\bar{n}$.}
\end{align*}
Based on these notations, the following proposition provides a computable element in ${\cal M}(y)$.
\begin{proposition}\label{prop: jacobian_q}
For any $y\in \mathbb{R}^{\bar{n}}$, we have that
\begin{align*}
W=\Theta P_y^T (I_{\bar n}-B^T(\Sigma B B^T \Sigma)^{\dagger}B)P_y\in {\cal M}(y),
\end{align*}
where $(\cdot)^{\dagger}$ denotes the pseduoinverse. Further details on the computation of $W$ could be found in \cite[Proposition 2.8]{lin2019efficient}.
\end{proposition}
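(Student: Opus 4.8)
The plan is to derive $W$ by differentiating the composite representation of ${\rm Prox}_q$ established in Proposition \ref{prop: proximalmapping_p}. Recall from part (c) that ${\rm Prox}_q(y) = {\rm Prox}_{\rho\|\cdot\|_1}({\rm Prox}_{\lambda p}(y))$, where the outer map is component-wise soft-thresholding and the inner map is ${\rm Prox}_{\lambda p}(y) = P_y^T \Pi_{\cal D}(P_y y - \lambda w)$ from part (b). I would therefore construct an element of ${\cal M}(y)$ by applying the chain rule for Clarke generalized Jacobians to this composition, assembling $W$ from a Jacobian element of each factor.

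First I would handle the outer soft-thresholding map $z \mapsto {\rm sign}(z)\circ\max(|z|-\rho,0)$. Since this map is separable, its generalized Jacobian consists of diagonal matrices: away from the kink points $|z_i| = \rho$ the $i$-th diagonal entry is $1$ when $|z_i| > \rho$ and $0$ when $|z_i| < \rho$, while at a kink one may select either value. Evaluating at $z = {\rm Prox}_{\lambda p}(y)$ yields exactly the diagonal matrix $\Theta$ defined above.

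Next I would compute a Jacobian element of the inner map $y \mapsto P_y^T \Pi_{\cal D}(P_y y - \lambda w)$. The sorting permutation $P_y$ is locally constant on a neighborhood of $y$ (and ties are resolved by fixing one admissible permutation, under which the final expression is invariant), so differentiation passes the permutation through and reduces the problem to differentiating the metric projection $\Pi_{\cal D}$ at the point $P_y y - \lambda w$. For the polyhedral cone ${\cal D} = \{x \mid Bx \geq 0\}$, the projection is locally affine and its generalized Jacobian at a point is governed by the constraints active at the projected point, namely the index set ${\cal I}_{\cal D}(P_y y - \lambda w)$ encoded by the selection matrix $\Sigma$. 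The standard formula for the generalized Jacobian of a projection onto a polyhedral set gives the element $I_{\bar n} - B^T(\Sigma B B^T \Sigma)^\dagger B$, which, using that the inactive rows and columns of $\Sigma B B^T \Sigma$ vanish, is precisely the orthogonal projector onto the null space of the active rows of $B$ selected by $\Sigma$. Conjugating by $P_y$ then produces the Jacobian element $P_y^T(I_{\bar n} - B^T(\Sigma B B^T \Sigma)^\dagger B)P_y$ of the inner map.

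Finally I would multiply the two factors in the order dictated by the chain rule, $\Theta$ on the outside and the inner element on the inside, to obtain $W = \Theta P_y^T(I_{\bar n} - B^T(\Sigma B B^T \Sigma)^\dagger B)P_y$, and confirm that this product lies in ${\cal M}(y)$ as defined in \cite{lin2019efficient}. The main obstacle is establishing the precise form of the generalized Jacobian of the polyhedral projection $\Pi_{\cal D}$ and verifying that the chain-rule product is genuinely contained in ${\cal M}(y)$ rather than merely in the Clarke Jacobian of the composition, since the composition rule gives only an inclusion in general. Handling the nonuniqueness of $P_y$ under ties, together with the boundary cases of both the projection active set and the soft-thresholding kinks, would then require checking that the constructed $W$ is independent of these choices.
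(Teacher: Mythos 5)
Your proposal is correct and is essentially the argument behind the paper's own treatment: the paper gives no proof beyond deferring to \cite[Proposition 2.8]{lin2019efficient}, where $W$ is obtained exactly as you construct it --- $\Theta$ from the separable soft-thresholding, the projector $I_{\bar n}-B^T(\Sigma B B^T \Sigma)^{\dagger}B$ from the active set of the polyhedral projection $\Pi_{\cal D}$, conjugation by $P_y$, and the chain-rule product. Your closing worry dissolves once one notes that ${\cal M}(y)$ is in that reference \emph{defined} as the set of such products (a surrogate generalized Jacobian introduced precisely because the Clarke chain rule yields only an inclusion), so $W\in{\cal M}(y)$ holds by construction, the substantive content being the separate proof that ${\rm Prox}_q$ is strongly semismooth with respect to ${\cal M}$.
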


\subsection{Results related to the $\log\det(\cdot)$ function}
\label{subsec: logbarrier}
The following proposition states the computation of the proximal mapping of $-\mu\log\det(\cdot)$ and the corresponding Jacobian, which is directly obtained from \cite[Lemma 2.1]{wang2010solving}. For simplicity, we denote $r(X):=-\log\det(X)$ for any $X\succeq 0$.
\begin{proposition}\label{prop_Z}
	For any given $X\in \mathbb{S}^n$, with its eigenvalue decomposition $X=P{\rm Diag}(d)P^T$, where $d$ is the vector of eigenvalues and the columns of $P$ are the corresponding orthonormal set of eigenvectors. We assume that $d_1\geq \cdots\geq d_r>0\geq d_{r+1}\geq\cdots d_n$. Given $\mu>0$ and the scaler function $\phi_{\mu}^+(x):=(\sqrt{x^2+4\mu}+x)/2$ for all $x\in \mathbb{R}$, we define its matrix counterpart:
	\begin{align*}
	\phi_{\mu}^+(X):=P{\rm Diag}(\phi_{\mu}^+(d))P^T,
	\end{align*}
	where $\phi_{\mu}^{+}(d)\in \mathbb{R}^n$ is such that its $i$-th component is given by $\phi_{\mu}^{+}(d_i)$.
	\\[5pt]
	(a) The proximal mapping of $\mu r(\cdot)$ can be computed as
	\begin{align*}
	{\rm Prox}_{\mu r}(X) = \phi_{\mu}^+(X).
	\end{align*}
	(b) $\phi_{\mu}^+$ is continuously differentiable and its Fr\'echet derivative $(\phi_{\mu}^+)'(X)$ at $X$ is given by
	\begin{align*}
	(\phi_{\mu}^+)'(X)[H]=P(\Omega\circ (P^THP))P^T \quad \forall H\in \mathbb{S}^n,
	\end{align*}
	where $\Omega\in \mathbb{S}^n$ is defined by
	\begin{align*}
	\Omega_{ij}=\frac{\phi_{\mu}^+(d_i)+\phi_{\mu}^+(d_j)}{\sqrt{d_i^2+4\mu}+\sqrt{d_j^2+4\mu}},\quad i,j=1,\cdots,n.
	\end{align*}
\end{proposition}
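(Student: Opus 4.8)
The plan is to handle parts (a) and (b) separately, in both cases reducing the matrix problem to its scalar (eigenvalue-level) counterpart by exploiting orthogonal invariance of the objective.

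For part (a), I would begin from the definition ${\rm Prox}_{\mu r}(X) = \arg\min_Y \{\frac{1}{2}\|Y-X\|^2 + \mu r(Y)\}$, where $r(Y)=-\log\det(Y)$ on the positive definite cone and $+\infty$ otherwise. This objective is strictly convex, lower semicontinuous, and coercive on the cone (the barrier $-\log\det$ tends to $+\infty$ as any eigenvalue of $Y$ approaches $0$, while the quadratic term dominates as the eigenvalues grow), so the minimizer exists, is unique, and lies in the interior. Hence it is characterized by the stationarity condition $Y - X - \mu Y^{-1}=0$, using $\nabla(-\mu\log\det Y)=-\mu Y^{-1}$. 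Rewriting this as $X = Y - \mu Y^{-1}$ exhibits $X$ as a primary matrix function of $Y$, so $[X,Y]=0$ and they share an orthonormal eigenbasis. Writing $X = P\,{\rm Diag}(d)\,P^T$ and seeking $Y = P\,{\rm Diag}(\nu)\,P^T$, the matrix equation decouples into the scalar quadratics $\nu_i^2 - d_i\nu_i - \mu = 0$, whose unique positive root is $\nu_i = (d_i+\sqrt{d_i^2+4\mu})/2 = \phi_{\mu}^+(d_i)$; this is strictly positive for every $d_i$ since $\sqrt{d_i^2+4\mu}>|d_i|$. This yields a feasible stationary point, and by uniqueness it equals ${\rm Prox}_{\mu r}(X)=\phi_{\mu}^+(X)$.

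For part (b), the key observation is that $\phi_{\mu}^+(X)$ is a L\"owner (primary matrix) operator generated by the scalar map $\phi_{\mu}^+(x)=(\sqrt{x^2+4\mu}+x)/2$, which is $C^\infty$ on all of $\mathbb{R}$ because $x^2+4\mu\geq 4\mu>0$. I would invoke the classical Daleckii--Krein differentiability theorem for L\"owner operators, which gives that such an operator is continuously Fr\'echet differentiable with $(\phi_{\mu}^+)'(X)[H]=P(\Gamma\circ(P^THP))P^T$, where $\Gamma$ is the first divided-difference matrix $\Gamma_{ij}=(\phi_{\mu}^+(d_i)-\phi_{\mu}^+(d_j))/(d_i-d_j)$ for $d_i\neq d_j$ and $\Gamma_{ii}=(\phi_{\mu}^+)'(d_i)$. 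It then remains to verify that $\Gamma=\Omega$. Rationalizing the square-root difference via $\sqrt{d_i^2+4\mu}-\sqrt{d_j^2+4\mu}=(d_i^2-d_j^2)/(\sqrt{d_i^2+4\mu}+\sqrt{d_j^2+4\mu})$ collapses the off-diagonal divided difference exactly to $\Omega_{ij}=(\phi_{\mu}^+(d_i)+\phi_{\mu}^+(d_j))/(\sqrt{d_i^2+4\mu}+\sqrt{d_j^2+4\mu})$, and the diagonal case $\Gamma_{ii}=(\phi_{\mu}^+)'(d_i)=\phi_{\mu}^+(d_i)/\sqrt{d_i^2+4\mu}$ is precisely the $i=j$ value of that same formula, so the two matrices coincide.

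The routine parts are the convexity and coercivity bookkeeping in (a) and the divided-difference algebra in (b). The one genuine subtlety I would take care to justify rather than assert is the commutation step in (a) — that the stationary $Y$ is diagonalized by the same orthogonal matrix as $X$. The clean justification is that $X = Y - \mu Y^{-1}$ is a function of $Y$, forcing $[X,Y]=0$; alternatively, one can bypass this step entirely by appealing to the general fact that the proximal mapping of an orthogonally invariant spectral function acts eigenvalue-wise, which reduces (a) immediately to the scalar proximal mapping of $-\mu\log(\cdot)$ and the same quadratic root.
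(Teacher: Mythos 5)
Your proof is correct. Note, however, that the paper itself contains no proof of this proposition: it is justified by a one-line citation to Lemma 2.1 of \cite{wang2010solving}. So your argument should be compared against that cited lemma, and it is essentially a self-contained reconstruction of it. For (a), your logic --- the minimizer exists and is unique by strict convexity and coercivity of the barrier, and the candidate $P{\rm Diag}(\phi_{\mu}^+(d))P^T$ is positive definite and satisfies the stationarity condition $Y-X-\mu Y^{-1}=0$ because each $\phi_{\mu}^+(d_i)$ is the positive root of $\nu^2-d_i\nu-\mu=0$ --- is the same mechanism as in the reference, which packages the computation via the identities $\phi_{\mu}^+(X)-\phi_{\mu}^-(X)=X$ and $\phi_{\mu}^+(X)\,\phi_{\mu}^-(X)=\mu I_n$, where $\phi_{\mu}^-(x)=(\sqrt{x^2+4\mu}-x)/2$. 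One remark: the commutation step you single out as the genuine subtlety is actually dispensable in your own write-up, since ``feasible stationary point of a convex problem plus uniqueness'' already closes the proof; this is worth noting because the commutation route has a wrinkle you do not address --- when $X$ has repeated eigenvalues, simultaneous diagonalizability of $X$ and $Y$ does not by itself entitle you to use the \emph{given} $P$; one needs the injectivity of $t\mapsto t-\mu/t$ on $(0,\infty)$ to conclude that $Y$ acts as a scalar on each eigenspace of $X$. For (b), invoking the Daleckii--Krein/L\"owner differentiability theorem for the $C^\infty$ scalar function and collapsing the divided-difference matrix to $\Omega$ (including the diagonal identity $(\phi_{\mu}^+)'(d_i)=\phi_{\mu}^+(d_i)/\sqrt{d_i^2+4\mu}$) is exactly the standard derivation behind the cited result. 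In short, the paper's citation buys brevity, while your version buys a verifiable, self-contained proof whose only external ingredient is the differentiability theorem for matrix functions.
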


\section{A two-phase algorithm}
\label{sec: algorithms}
In this section, we propose a two-phase algorithm to solve the problem \eqref{P} based on the augmented Lagrangian function of \eqref{D}. In Phase \uppercase\expandafter{\romannumeral1}, we design a symmetric Gauss-Seidel based alternating direction method of multipliers (sGS-ADMM) to solve the problem to a moderate level of accuracy. In Phase \uppercase\expandafter{\romannumeral2}, we employ a proximal augmented Lagrangian method (pALM) with its subproblems solved by the semismooth Newton method (SSN) to get a solution with high accuracy. Note that the sGS-ADMM not only can be used to generate a good initial point to warm-start the pALM, it can also be used alone to solve the problem. But as a first-order method, the sGS-ADMM may not be efficient enough in some cases to solve a problem to high accuracy. Thus in the second phase, we switch to the superlinearly convergent pALM to compute an accurate solution.

\subsection{Phase \uppercase\expandafter{\romannumeral1}: sGS-ADMM}
\label{subsec: sgsadmm}
A natural way to solve the problem \eqref{D} is the popular alternating direction method of the multipliers (ADMM), but as  shown via a counterexample in \cite{chen2016direct}, the directly extended sequential Gauss-Seidel-type multi-block ADMM may not be convergent even with a small step length. Thus,  in this paper, we employ a more delicate symmetric Gauss-Seidel-type multi-block ADMM, i.e., the sGS-ADMM to solve \eqref{D}. As is shown in \cite{chen2017efficient}, the sGS-ADMM is not only guaranteed to converge theoretically, in practice it also performs better than the possibly nonconvergent directly extended multi-block ADMM.

The Lagrangian function associated with \eqref{D} is given by	
\begin{align}
l(y,Z,S;X) &:= -\langle b,y\rangle  - \mu\log \det (Z) + Q^*(-S) + \delta_{\mathbb{S}_+^n}(Z)- n\mu+n\mu\log \mu \nonumber \\
&\quad - \langle C-{\cal A}^*y-Z-S,X\rangle,\quad \forall \,(y,Z,S,X)\in \mathbb{R}^m \times {\mathbb S}^n \times \mathbb{S}^{n} \times {\mathbb S}^n.\label{eq: lagragian function}
\end{align}
For $\sigma > 0$, the associated augmented Lagrangian function is
\begin{align}
{\cal L}_{\sigma}(y,Z,S;X) := l(y,Z,S;X) +\frac{\sigma}{2}\|C-{\cal A}^*y-Z-S\|^2.\label{eq: AL function}
\end{align}
Based on the augmented Lagrangian function \eqref{eq: AL function}, we design the sGS-ADMM for solving \eqref{D}. To be specific, we update $Z$ and $(y,S)$ alternatively as in the commonly used $2$-block ADMM, but with the key difference of applying the sGS iteration technique \cite{li2018qsdpnal} to the second block. The template for the algorithm is given as follows:
\begin{align*}
\left\{\begin{aligned}
&Z^{k+1}= \arg\min {\cal L}_{\sigma}(y^k,Z,S^k;X^k),\\
&\overline{y}^{k+1}= \arg\min {\cal L}_{\sigma}(y,Z^{k+1},S^k;X^k),\\
&S^{k+1}= \arg\min {\cal L}_{\sigma}(\bar{y}^{k+1},Z^{k+1},S;X^k),\\
&y^{k+1}= \arg\min {\cal L}_{\sigma}(y,Z^{k+1},S^{k+1};X^k),\\
&X^{k+1}=X^k-\tau\sigma(C-{\cal A}^*y^{k+1}-Z^{k+1}-S^{k+1}),
\end{aligned}\right.
\end{align*}
where $\tau\in(0,(1+\sqrt{5})/2)$ is a given step length that is typically set to be $1.618$. The implementation of updating each variable can be given as follows.

\paragraph{Updating of $Z$.} Given $\widehat{y},\widehat{S},\widehat{X}$, $\overline{Z}:=\arg\min {\cal L}_{\sigma}(\widehat{y},Z,\widehat{S};\widehat{X})$ can be obtained by
\begin{align*}
\overline{Z} =\underset{Z\succeq 0}{\arg\min} \displaystyle \  \Big\{\frac{\sigma}{2}\|Z+\frac{1}{\sigma}\widehat{M}\|^2- \mu\log\det(Z)
\Big\}=\phi_{\gamma}^+(-\frac{1}{\sigma}\widehat{M})=\frac{1}{\sigma}(\phi_{\gamma}^+(\widehat{M})-\widehat{M})
\end{align*}
where $\widehat{M} = \widehat{X}-\sigma(C-{\cal A}^*\widehat{y}-\widehat{S})$ and $\gamma=\mu\sigma$.

\paragraph{Updating of $y$.} Given $\widehat{Z},\widehat{S},\widehat{X}$, $\overline{y}:=\arg\min {\cal L}_{\sigma}(y,\widehat{Z},\widehat{S};\widehat{X})$ can be obtained by solving the linear system as
\begin{align*}
\overline{y} &= \underset{y\in \mathbb{R}^{n}}{\arg\min}\displaystyle \ \Big\{-\langle b,y\rangle+\frac{\sigma}{2}\|C-{\cal A}^*y-\widehat{Z}-\widehat{S}-\frac{1}{\sigma}\widehat{X}\|^2
\Big\}= ({\cal A}{\cal A}^*)^{-1} \big( {\cal A}(C-\widehat{S} -\widehat{Z} -\frac{1}{\sigma}\widehat{X}) +\frac{1}{\sigma}b \big).
\end{align*}

\paragraph{Updating of $S$.} Given $\widehat{y},\widehat{Z},\widehat{X}$, the updating of $S$ could be given as
\begin{align*}
\overline{S}&=\underset{S\in \mathbb{S}^{n}}{\arg\min}\displaystyle \  \Big\{Q^*(-S)+\frac{\sigma}{2}\|S+\widehat{V}\|^2 \Big\}= -{\rm Prox}_{Q^*}(\widehat{V})=-\widehat{V}+{\rm Prox}_Q(\widehat{V}),
\end{align*}
where $\widehat{V}=-(C-{\cal A}^*\widehat{y}-\widehat{Z}-\widehat{X}/\sigma)$.

The whole sGS-ADMM for solving \eqref{D} can be summarized as below.
\begin{breakablealgorithm}
	\caption{\small {\bf : sGS-ADMM}}
	\hspace*{0.02in} \raggedright {\bf Input:} $X^{0}\in {\mathbb S}^n_{++}$, $S^{0}\in \mathbb{S}^{n}$
	$y^0\in \mathbb{R}^{m}$, $\sigma > 0$, $\tau\in(0,(1+\sqrt{5})/2)$, $\gamma:=\mu\sigma$, and $k=0$.\\
	
	\begin{algorithmic}[1]
		\STATE   Compute
		\begin{align*}
		Z^{k+1} = (\phi_{\gamma}^+(M^k)-M^k)/\sigma,\quad  M^k=X^k-\sigma(C-{\cal A}^*y^k-S^k).
		\end{align*}
		
		\STATE Compute
		\begin{align*}
		\left\{\begin{array}{ll}
		\overline{y}^{k+1} = &({\cal A}{\cal A}^*)^{-1}\big( {\cal A}(C-S^k -Z^{k+1} -X^{k}/\sigma) +b/\sigma\big),\\[2mm]
		S^{k+1} = &-V^k+{\rm Prox}_Q(V^k), \quad V^k=-(C-{\cal A}^*\overline{y}^{k+1}-Z^{k+1}-X^k/\sigma),\\[2mm]
		y^{k+1} = &({\cal A}{\cal A}^*)^{-1}\big( {\cal A}(C-S^{k+1}-Z^{k+1}-X^{k}/\sigma)+b/\sigma\big).
		\end{array}\right.
	    \end{align*}
		
		\STATE Compute
		\begin{align*}
		X^{k+1}=X^{k}-\tau\sigma(C-{\cal A}^*y^{k+1}-S^{k+1}-Z^{k+1}).
		\end{align*}
		
		\STATE $k\leftarrow k+1$, go to Step 1.
	\end{algorithmic}
\end{breakablealgorithm}

The convergence result of the above algorithm can be obtained from \cite[Theorem 5.1]{chen2017efficient} without much difficulty.
\begin{theorem}
	Let $\{(y^k,Z^k,S^k,X^k)\}$ be the sequence generated by the sGS-ADMM. Then the sequence $\{y^k,Z^k,S^k\}$ converges to an optimal solution of \eqref{D} and $\{X^k\}$ converges to the optimal solution $X^*$ of \eqref{P}.
\end{theorem}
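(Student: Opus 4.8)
The plan is to recognize that the sGS-ADMM described here is a concrete instance of the general symmetric Gauss-Seidel ADMM framework analyzed in \cite{chen2017efficient}, and to verify that the convergence conditions required by \cite[Theorem 5.1]{chen2017efficient} are met by our problem. First I would rewrite the dual \eqref{D} in the standard two-block separable form that the sGS-ADMM convergence theory addresses, namely a problem of the form $\min\{f(u)+g(v)\mid \mathcal{A}^*u+\mathcal{B}^*v=c\}$ where one of the blocks carries a composite (multi-variable) structure that is handled by an sGS sweep. Here the natural splitting is to take $Z$ as the first block (with the term $-\mu\log\det(Z)+\delta_{\mathbb{S}^n_+}(Z)$) and the pair $(y,S)$ as the second block (with the terms $-\langle b,y\rangle$ and $Q^*(-S)$), coupled through the single linear constraint $C-\mathcal{A}^*y-Z-S=0$. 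The symmetric Gauss-Seidel technique of \cite{li2018qsdpnal} is precisely what permits decomposing the $(y,S)$ block update into the sequential $\bar y\to S\to y$ sweep displayed in the algorithm while retaining the convergence guarantees of a genuine two-block ADMM.

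Next I would check the three standing hypotheses needed for the cited theorem. The convexity hypothesis holds because $-\mu\log\det(\cdot)$ is closed convex on $\mathbb{S}^n_{++}$, $\delta_{\mathbb{S}^n_+}(\cdot)$ is an indicator of a closed convex cone, and $Q^*(-\cdot)$ is closed convex as the conjugate of the closed convex $Q$. The constraint qualification / solution existence hypothesis is exactly the blanket assumption stated in Section \ref{subsec: problem}: the KKT system \eqref{KKT} has a nonempty solution set, and $\mathcal{A}$ is surjective, which guarantees that $\mathcal{A}\mathcal{A}^*$ is invertible so that each $y$-subproblem in the algorithm is well defined. The step-length condition $\tau\in(0,(1+\sqrt{5})/2)$ matches the admissible range in \cite{chen2017efficient}, and the self-adjoint positive semidefinite operators implicitly introduced by the sGS decomposition of the $(y,S)$ block are automatically of the required form, so no additional proximal terms need be verified beyond what the sGS framework supplies.

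With these hypotheses in place, \cite[Theorem 5.1]{chen2017efficient} yields convergence of the multiplier-and-primal sequence: the dual iterates $\{(y^k,Z^k,S^k)\}$ converge to an optimal solution of \eqref{D}, and the sequence $\{X^k\}$ of multipliers for the equality constraint converges to a point that, together with the dual limit, satisfies the KKT system \eqref{KKT}. The final identification of the limit of $\{X^k\}$ with the unique primal optimal solution $X^*$ follows by invoking the KKT optimality conditions: any KKT point has its $X$-component equal to the optimal solution of \eqref{P}, and since the objective of \eqref{P} is strictly convex in $X$ (noted immediately after \eqref{KKT}), that optimal solution is unique and equals $X^*$.

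The main obstacle I anticipate is the bookkeeping needed to cast our specific dual into the exact template of \cite{chen2017efficient} — in particular, confirming that the algorithm's sequential updates $\bar y^{k+1}\to S^{k+1}\to y^{k+1}$ genuinely coincide with one sGS cycle on the joint $(y,S)$ block (so that the equivalence between the sGS-ADMM and an inexact two-block ADMM with an additional positive semidefinite proximal term holds), rather than being an ad hoc Gauss-Seidel sweep with no convergence guarantee. Once that equivalence is established, everything else is a routine verification, which is why the statement can assert that the result follows ``without much difficulty.''
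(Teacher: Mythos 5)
Your proposal is correct and follows essentially the same route as the paper: the paper's entire proof is the observation that the result follows from \cite[Theorem 5.1]{chen2017efficient}, with the blanket assumptions (surjectivity of ${\cal A}$, nonempty KKT solution set) and the strict convexity of the primal objective supplying the hypotheses and the identification of the limit of $\{X^k\}$ with $X^*$. Your elaboration of the two-block splitting ($Z$ versus $(y,S)$), the sGS-sweep equivalence, and the step-length condition is precisely the ``without much difficulty'' verification the paper leaves implicit.
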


\subsection{Phase \uppercase\expandafter{\romannumeral2}: pALM}
\label{subsec: abcd}
The augmented Lagrangian method (ALM) is a widely used method for solving the convex optimization problem in the literature. It has the important property of possessing superlinear convergence guarantee.

We write the dual problem \eqref{D} in the following unconstrained form
\begin{align}\label{uncon_D}
\min_{y\in\mathbb{R}^m,S\in \mathbb{S}^n} \displaystyle \ \Big\{f(y,S):=&-\langle b,y\rangle  - \mu\log \det (C-{\cal A}^*y-S) + Q^*(-S)\notag\\
&+ \delta_{\mathbb{S}_+^n}(C-{\cal A}^*y-S)-n\mu +n\mu \log\mu
\Big\}.\tag{D$'$}
\end{align}
Denote
\begin{align*}
\widetilde{f}(y,S,Z,V)&=-\langle b,y\rangle  - \mu\log \det (C-{\cal A}^*y-S-Z) \\
&\quad + Q^*(-S+V)+ \delta_{\mathbb{S}_+^n}(C-{\cal A}^*y-S-Z) -n\mu +n\mu \log\mu.
\end{align*}
Then by \cite[Example 11.46]{rockafellar2009variational}, the Lagrangian function associated with \eqref{uncon_D} is
\begin{align*}
\widetilde{l}(y,S;X,U) &= \inf_{Z\in \mathbb{S}^n,V\in\mathbb{S}^n}\Big\{ \widetilde{f}(y,S,Z,V)-\langle Z,X\rangle -\langle U,V\rangle
\Big\}\\
&=-\langle b,y\rangle  - \langle C-{\cal A}^*y-S,X\rangle +\mu\log\det X -\delta_{\mathbb{S}_+^n}(X)-\langle U,S\rangle -Q(U).
\end{align*}
By \cite[Example 11.57]{rockafellar2009variational}, the corresponding augmented Lagrangian function is
\begin{align*}
&\widetilde{L}_{\sigma}(y,S;X,U)=\sup_{\widetilde{X}\in \mathbb{S}^n,\widetilde{U}\in\mathbb{S}^n}\Big\{\widetilde{l}(y,S;\widetilde{X},\widetilde{U}) -\frac{1}{2\sigma}\|X-\widetilde{X}\|^2-\frac{1}{2\sigma}\|U-\widetilde{U}\|^2
\Big\}\\
&=-\langle b,y\rangle-\frac{1}{\sigma}{\rm E}_{\mu\sigma r}(M(y,S))+\frac{1}{2\sigma}\|M(y,S)\|^2-\frac{1}{2\sigma}\|X\|^2-\frac{1}{\sigma}{\rm E}_{\sigma Q}(U-\sigma S)+\frac{1}{2\sigma}\|U-\sigma S\|^2-\frac{1}{2\sigma}\|U\|^2,
\end{align*}
where $M(y,S) = X-\sigma(C-{\cal A}^*y-S)$.

Based on the above notations, we describe the proximal augmented Lagrangian method (pALM) for solving \eqref{uncon_D} as follows.
\begin{breakablealgorithm}
	\caption{\small {\bf : pALM}}
	\hspace*{0.02in} \raggedright {\bf Input:} $y^0\in \mathbb{R}^m$, $X^{0}\in {\mathbb S}^n_{++}$, $S^0,U^{0}\in {\mathbb S}^{n}$, $\tau>0$, $\sigma_0 >0$, $k=0$.\\
	
	\begin{algorithmic}[1]
		\STATE   Compute
		\begin{align}
		(y^{k+1},S^{k+1})\approx \underset{y\in\mathbb{R}^m,S\in\mathbb{S}^n}{\arg\min}  \displaystyle \ \Big\{\Psi_k(y,S)
		:=\widetilde{L}_{\sigma}(y,S;X^k,U^k) +\frac{\tau}{2\sigma_k}(\|y-y^k\|^2+\|S-S^k\|^2)\Big\}.
		\label{subproblem:pALM}
		\end{align}
		
		\STATE Compute
		\begin{align*}
		X^{k+1}&= {\rm Prox}_{\mu\sigma_k r}(X^k-\sigma(C-{\cal A}^*y^{k+1}-S^{k+1})),\\
		U^{k+1}&= {\rm Prox}_{\sigma_k Q}(U^k-\sigma S^{k+1}).
		\end{align*}
		
		\STATE Update $\sigma_{k+1}\uparrow \sigma_{\infty}$, $k\leftarrow k+1$, go to Step 1.
	\end{algorithmic}
\end{breakablealgorithm}

\subsubsection{Convergence result of the pALM}
\label{subsubsec: convergencepALM}
The global convergence and global linear-rate convergence of the pALM can be obtained following the idea in \cite{li2019asymptotically}. To establish the convergence result, we define the maximal monotone operator
\begin{align*}
{\cal T}_{\widetilde{l}}(y,S,X,U):=\Big\{ (y',S',X',U')\mid (y',S',-X',-U')\in \partial \widetilde{l}(y,S;X,U)
\Big\},
\end{align*}
and its inverse operator
\begin{align*}
{\cal T}_{\widetilde{l}}^{-1}(y',S',X',U'):=\arg\min_{y,S}\max_{X,U}\ \displaystyle
\Big\{ \widetilde{l}(y,S;X,U)-\langle y',y\rangle-\langle S',S\rangle+\langle X',X\rangle+\langle U',U\rangle
\Big\}.
\end{align*}
As we note in the pALM, we need to specify the stopping criterion of computing the approximate solution $(y^{k+1},S^{k+1})$ in \eqref{subproblem:pALM}. Denote the operator
\begin{align*}
\Lambda={\rm Diag}(\tau I_m,\tau{\cal I}_n,{\cal I}_n,{\cal I}_n),
\end{align*}
where ${\cal I}_n$ is the identity operator over $\mathbb{S}^n$. We use the following stopping criteria for solving \eqref{subproblem:pALM}:
\begin{align}
\|\nabla \Psi_k(y^{k+1},S^{k+1})\|&\leq \frac{\min\{\sqrt{\tau},1\}}{\sigma_k}\varepsilon_k,\tag{A}\label{stopA}\\
\|\nabla \Psi_k(y^{k+1},S^{k+1})\|&\leq \frac{\min\{\sqrt{\tau},1\}}{\sigma_k}\delta_k\|(y^{k+1},S^{k+1},X^{k+1},U^{k+1})-(y^{k},S^{k},X^{k},U^{k})\|_{\Lambda},\tag{B}\label{stopB}
\end{align}
where $\{\varepsilon_k\}$ and $\{\delta_k\}$ are summable nonnegative sequences satisfying $\delta_k<1$ for all $k$.

Based on the above preparation, we could present the convergence result of the pALM in the following theorem, which is a direct application of \cite[Theorem 1 and Theorem 2]{li2019asymptotically}
\begin{theorem}
	(a) Let $\{(y^{k},S^{k},X^{k},U^{k})\}$ be the sequence generated by the pALM with the stopping criterion \eqref{stopA}. Then $\{(y^{k},S^{k},X^{k},U^{k})\}$ is bounded, $\{(y^{k},S^{k})\}$ converges to an optimal solution of \eqref{uncon_D}, and both $\{X^{k}\}$ and $\{U^{k}\}$ converge to the optimal solution $X^*$ of \eqref{P}.
	\\[5pt]
	(b) Let $\rho$ be a positive number such that $\rho>\sum_{k=0}^{\infty}\varepsilon_k$. Asuume that there exists $\kappa>0$ such that
	\begin{align*}
	{\rm dist}_{\Lambda}((y,S,X,U),{\cal T}_{\widehat{l}}^{-1}(0))\leq \kappa {\rm dist}(0,{\cal T}_{\widehat{l}}(y,S,X,U)),
	\end{align*}
	for all $(y,S,X,U)$ satisfying ${\rm dist}_{\Lambda}((y,S,X,U),{\cal T}_{\widehat{l}}^{-1}(0))\leq \rho$.
	Suppose that the initial point $(y^0,S^0,X^0,U^0)$ satisfies
	$$
	{\rm dist}_{\Lambda}((y^0,S^0,X^0,U^0),{\cal T}_{\widehat{l}}^{-1}(0))\leq \rho-\sum_{k=0}^{\infty}\varepsilon_k.
	$$
	Let $\{(y^{k},S^{k},X^{k},U^{k})\}$ be the sequence generated by the pALM with the stopping criteria \eqref{stopA} and \eqref{stopB}. Then for $k\geq 0$, it holds that
	\begin{align*}
	{\rm dist}_{\Lambda}((y^{k+1},S^{k+1},X^{k+1},U^{k+1}),{\cal T}_{\widehat{l}}^{-1}(0))\leq\mu_k
	{\rm dist}_{\Lambda}((y^{k},S^{k},X^{k},U^{k}),{\cal T}_{\widehat{l}}^{-1}(0)),
	\end{align*}
	where
	$$\mu_k=\frac{\delta_k+(1+\delta_k)\kappa \max\{\tau,1\} /\sqrt{\sigma_k^2+\kappa^2\max\{\tau^2,1\}}}{1-\delta_k}\rightarrow\mu_{\infty}:=\frac{\kappa \max\{\tau,1\}}{\sqrt{\sigma_{\infty}^2+\kappa^2\max\{\tau^2,1\}}}.$$
\end{theorem}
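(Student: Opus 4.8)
The plan is to recognize the pALM as an inexact proximal point algorithm (PPA) applied to the maximal monotone operator ${\cal T}_{\widetilde{l}}$ in the metric induced by $\Lambda$, and then to invoke \cite[Theorem 1 and Theorem 2]{li2019asymptotically} after verifying their hypotheses. First I would observe that $\widetilde{l}(y,S;X,U)$ is a closed proper convex-concave function -- convex in $(y,S)$ and concave in $(X,U)$ -- so that ${\cal T}_{\widetilde{l}}$ is maximal monotone and its set of zeros ${\cal T}_{\widetilde{l}}^{-1}(0)$ coincides with the set of saddle points of $\widetilde{l}$, i.e., with the KKT points of \eqref{P} and \eqref{D}. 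Under the blanket assumption that the KKT system \eqref{KKT} has a nonempty solution set, ${\cal T}_{\widetilde{l}}^{-1}(0)\neq \emptyset$.

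The central step is to show that one exact pass of Step 1--Step 2 realizes the resolvent ${\cal P}_k := ({\cal I}+\sigma_k \Lambda^{-1}{\cal T}_{\widetilde{l}})^{-1}$ evaluated at $w^k := (y^k,S^k,X^k,U^k)$. Here the proximal term $\frac{\tau}{2\sigma_k}(\|y-y^k\|^2+\|S-S^k\|^2)$ supplies the $(y,S)$-block of the metric $\Lambda$, while the two Moreau-envelope terms in $\widetilde{L}_{\sigma}$ automatically contribute the $\frac{1}{2\sigma_k}$ proximal regularizations in the $(X,U)$-blocks; the explicit updates $X^{k+1}={\rm Prox}_{\mu\sigma_k r}(M(y^{k+1},S^{k+1}))$ and $U^{k+1}={\rm Prox}_{\sigma_k Q}(U^k-\sigma_k S^{k+1})$ are precisely the multiplier updates dictated by the resolvent, using $\nabla {\rm E}_f = {\cal I}-{\rm Prox}_f$ together with Proposition \ref{prop_Z}. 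Differentiating $\Psi_k$ through the Moreau-envelope gradient formula then yields an identity expressing $\nabla \Psi_k(y^{k+1},S^{k+1})$ in terms of the residual $w^{k+1}-{\cal P}_k(w^k)$; the scaling $\min\{\sqrt{\tau},1\}$ and the factor $1/\sigma_k$ in \eqref{stopA} and \eqref{stopB} are chosen exactly so that these two criteria imply Rockafellar's absolute and relative inexactness conditions $\|w^{k+1}-{\cal P}_k(w^k)\|_{\Lambda}\le \varepsilon_k$ and $\|w^{k+1}-{\cal P}_k(w^k)\|_{\Lambda}\le \delta_k\|w^{k+1}-w^k\|_{\Lambda}$, respectively.

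With this equivalence in hand, part (a) follows from \cite[Theorem 1]{li2019asymptotically}: the inexact PPA governed by the summable criterion \eqref{stopA} produces a bounded sequence converging to some $w^\infty\in {\cal T}_{\widetilde{l}}^{-1}(0)$, whence $(y^k,S^k)$ converges to an optimal solution of \eqref{uncon_D} and $(X^k,U^k)$ converges to a KKT solution; since the objective of \eqref{P} is strictly convex in $X$, the limit of both $\{X^k\}$ and $\{U^k\}$ must be the unique $X^*$. For part (b), the postulated error bound is exactly the metric subregularity (calmness) of ${\cal T}_{\widetilde{l}}^{-1}$ at $0$ in the $\Lambda$-metric required by \cite[Theorem 2]{li2019asymptotically}; combining it with the relative criterion \eqref{stopB} gives the stated $\Lambda$-distance contraction with asymptotic rate $\mu_\infty = \kappa\max\{\tau,1\}/\sqrt{\sigma_\infty^2+\kappa^2\max\{\tau^2,1\}}$.

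I expect the main obstacle to be the second paragraph -- verifying the exact resolvent identification and, in particular, computing $\nabla\Psi_k$ carefully enough to certify that \eqref{stopA} and \eqref{stopB} coincide with Rockafellar's criteria. The delicate point is tracking how the two separate proximal blocks (the $\log\det$ block through $M(y,S)$ and the $Q^*$ block through $U-\sigma_k S$) combine into a single resolvent in the block-weighted norm $\|\cdot\|_{\Lambda}$, and checking that the constant $\min\{\sqrt{\tau},1\}$ indeed absorbs the worst-case block scaling so that no summability is lost in passing from the gradient bound to the residual bound.
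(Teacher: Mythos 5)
Your proposal follows exactly the paper's route: the paper offers no detailed proof at all, stating only that the theorem "is a direct application of \cite[Theorem 1 and Theorem 2]{li2019asymptotically}," which is precisely the inexact proximal-point/ALM framework in the $\Lambda$-weighted metric that you invoke. Your additional verification sketch (resolvent identification, matching the stopping criteria \eqref{stopA} and \eqref{stopB} to Rockafellar-type absolute and relative conditions) is exactly what the phrase "direct application" leaves implicit, so your argument is correct and essentially identical in approach.
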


\subsubsection{A semismooth Newton method for solving the pALM subproblems}
\label{subsubsec: ssn}
As one can see, the main task in the pALM is to solve the subproblem \eqref{subproblem:pALM} in an efficient way. Note that given $(\widetilde{y},\widetilde{S},\widetilde{X},\widetilde{U})$, the subproblem \eqref{subproblem:pALM} has the form of
\begin{align*}
\min_{y\in \mathbb{R}^m,S\in \mathbb{S}^{n}}\ \displaystyle \ \Big\{ \Psi(y,S):=\widetilde{L}_{\sigma}(y,S;\widetilde{X},\widetilde{U}) +\frac{\tau}{2\sigma}(\|y-\widetilde{y}\|^2+\|S-\widetilde{S}\|^2)\Big\}.
\end{align*}
Since $\Psi(\cdot,\cdot)$ is a strongly convex function on $\mathbb{R}^m\times\mathbb{S}^{n}$, the above minimization problem has a unique optimal solution, denoted as $(\overline{y},\overline{S})$, which can be computed by solving the nonsmooth optimality condition:
\begin{align}
\nabla \Psi(y,S)=\begin{pmatrix}
-b+{\cal A}{\rm Prox}_{\mu\sigma r}(\widetilde{M}(y,S) )+\frac{\tau}{\sigma}(y-\widetilde{y})\\[5pt]
{\rm Prox}_{\mu\sigma r}(\widetilde{M}(y,S) )-{\rm Prox}_{\sigma Q}(\widetilde{U}-\sigma S)+\frac{\tau}{\sigma}(S-\widetilde{S})
\end{pmatrix}
=0,\label{eq: nablaphi}
\end{align}
where $\widetilde{M}(y,S) = \widetilde{X}-\sigma(C-{\cal A}^*y-S)$.

Define the operator $\hat{\partial}^2 \Psi(y,S):\mathbb{R}^m\times\mathbb{S}^{n}\rightarrow \mathbb{R}^m\times\mathbb{S}^{n}$ as
	\begin{align*}
	\hat{\partial}^2 \Psi(y,S)\begin{pmatrix}
	d_y\\[5pt]
	d_S
	\end{pmatrix}=\sigma \begin{pmatrix}
	{\cal A}\\[5pt]
	{\cal I}_n
	\end{pmatrix}(\phi_{\mu\sigma}^+(\widetilde{M}(y,S) ))'({\cal A}^*d_y+d_S)+\sigma \begin{pmatrix}
	0\\[5pt]
	\partial {\rm Prox}_{\sigma Q}(\widetilde{U}-\sigma S)[d_S]
	\end{pmatrix}+\frac{\tau}{\sigma} \begin{pmatrix}
	d_y\\[5pt]
	d_S
	\end{pmatrix},
	\end{align*}
	for any $d_y\in \mathbb{R}^m$, $d_S\in \mathbb{S}^{n}$. We can treat $\hat{\partial}^2 \Psi(y,S)$ as the generalized Jacobian of $\nabla \Psi(y,S)$ at $(y,S)$. By the analysis of the regularization term $Q(\cdot)$ and the function $r(\cdot)$ in Section \ref{sec: problem_formulation}, $\nabla \Psi(\cdot,\cdot)$ is strongly semismooth with respect to $\hat{\partial}^2 \Psi(\cdot,\cdot)$. Thus we could apply the semismooth Newton method (SSN) to solve \eqref{eq: nablaphi}, which has the following template.

\begin{breakablealgorithm}
	\caption{\small {\bf : SSN }}
	\hspace*{0.02in} \raggedright {\bf Input:} $\beta\in(0,1], \eta\in(0,1), \textrm{and } \zeta\in(0,\frac{1}{2}), \delta\in(0,1)$, choose $y^{0}\in \mathbb{R}^m$, $S^0\in\mathbb{S}^n$, and set $j=0$.\\
	
	\begin{algorithmic}[1]
		\STATE Choose ${\cal H}_j\in \partial {\rm Prox}_{\sigma Q}(\widetilde{U}-\sigma S^j)$, use the conjugate gradient method (CG) to solve the linear system
		\begin{equation*}
        \sigma \begin{pmatrix}
		{\cal A}\\[5pt]
		{\cal I}_n
		\end{pmatrix}(\phi_{\gamma}^+(\widetilde{M}(y^j,S^j)))'({\cal A}^*d_y^j+d_S^j)+\sigma \begin{pmatrix}
		0\\[5pt]
		{\cal H}_j d_S^j
		\end{pmatrix}+\frac{\tau}{\sigma} \begin{pmatrix}
		d_y^j\\[5pt]
		d_S^j
		\end{pmatrix}=
		-\nabla\Psi(y^j,S^j)\label{eq-Newton}
		\end{equation*}
		to obtain $d_y^j$ and $d_S^j$ such that the residual is no larger than $\min\{\eta,\|\nabla\Psi(y^j,S^j)\|^{1+\beta}\}$.
		
		\STATE  Set $\alpha_{j}=\delta^{m_{j}}$, where $m_{j}$ is the first nonnegative integer $m$ for which
		$$\Psi(y^{j}+\delta^{m}d_y^{j},S^{j}+\delta^{m}d_S^{j})\leq
		\Psi(y^{j},S^j)+\zeta\delta^{m} \biggl\langle \nabla\Psi(y^{j},S^j),\begin{pmatrix}
		d_y^j\\[5pt]
		d_S^j
		\end{pmatrix}\biggr\rangle.$$		
		
		\STATE Set $y^{j+1}=y^{j}+\alpha_{j}d_y^{j}$, $S^{j+1}=S^{j}+\alpha_{j}d_S^{j}$, $j\leftarrow j+1$, go to Step 1.
	\end{algorithmic}
\end{breakablealgorithm}

Since the operator $\hat{\partial}^2 \Psi(\cdot,\cdot)$ is positive definite, we can easily obtain the following superlinear convergence result of the SSN method from \cite{zhao2010newton}.
\begin{theorem}
	Let $\{(y^j,S^j)\}$ be the sequence generated by the SSN method, then $\{(y^j,S^j)\}$ converges to $(\overline{y},\overline{S})$ and
	\begin{align*}
	\|(y^{j+1},S^{j+1})-(\overline{y},\overline{S})\|=O(\|(y^j,S^j)-(\overline{y},\overline{S})\|^{1+\beta}).
	\end{align*}
\end{theorem}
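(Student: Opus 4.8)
The plan is to recognize that this theorem is a direct application of the general semismooth Newton convergence theory of \cite{zhao2010newton}, whose hypotheses are twofold: (i) the residual map $\nabla\Psi$ is strongly semismooth with respect to the surrogate generalized Jacobian $\hat{\partial}^2\Psi$ used in the Newton system, and (ii) every element of $\hat{\partial}^2\Psi(y,S)$ is self-adjoint and positive definite. Ingredient (i) has already been recorded just before the algorithm, on the basis of the analysis of $Q(\cdot)$ and $r(\cdot)$ in Section \ref{sec: problem_formulation}; the element $(\phi_{\mu\sigma}^+)'(\widetilde M(y,S))$ is moreover a genuine Fr\'echet derivative by Proposition \ref{prop_Z}(b). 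Hence the only substantive verification is (ii), which I would establish uniformly with an explicit positive lower bound.

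To prove (ii), I would pair $\hat{\partial}^2\Psi(y,S)$ with an arbitrary direction $(d_y,d_S)\in\mathbb{R}^m\times\mathbb{S}^n$ and compute, for any ${\cal H}\in\partial{\rm Prox}_{\sigma Q}(\widetilde U-\sigma S)$,
\begin{align*}
\left\langle \begin{pmatrix} d_y\\ d_S\end{pmatrix},\, \hat{\partial}^2\Psi(y,S)\begin{pmatrix} d_y\\ d_S\end{pmatrix}\right\rangle
&= \sigma\big\langle {\cal A}^*d_y+d_S,\,(\phi_{\mu\sigma}^+)'(\widetilde M(y,S))[{\cal A}^*d_y+d_S]\big\rangle\\
&\quad + \sigma\langle d_S,\,{\cal H}[d_S]\rangle + \frac{\tau}{\sigma}\big(\|d_y\|^2+\|d_S\|^2\big),
\end{align*}
where the block structure of $\begin{pmatrix}{\cal A}\\ {\cal I}_n\end{pmatrix}$ and its adjoint collapses the first summand into the displayed form. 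The first term is nonnegative because $(\phi_{\mu\sigma}^+)'=({\rm Prox}_{\mu\sigma r})'$ is the derivative of the proximal mapping of the closed convex function $\mu\sigma r$, hence self-adjoint and positive semidefinite (concretely, the entries $\Omega_{ij}$ in Proposition \ref{prop_Z}(b) lie in $[0,1]$ since $0\le\phi_{\mu\sigma}^+(x)\le\sqrt{x^2+4\mu\sigma}$). The second term is nonnegative because ${\cal H}$, as an element of the Clarke Jacobian of the proximal mapping of the convex function $\sigma Q$, is likewise self-adjoint positive semidefinite. The third term is strictly positive for $(d_y,d_S)\ne 0$, so the whole quadratic form is bounded below by $\tfrac{\tau}{\sigma}(\|d_y\|^2+\|d_S\|^2)$, giving uniform positive definiteness.

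Granting (i) and (ii), I would assemble the conclusion in the two stages of \cite{zhao2010newton}. In the global stage, strong convexity and coercivity of $\Psi$ furnish a unique minimizer $(\overline y,\overline S)$ with $\nabla\Psi(\overline y,\overline S)=0$; positive definiteness of the Jacobian makes each approximate Newton direction $(d_y^j,d_S^j)$ a descent direction for $\Psi$, so the Armijo rule in Step 2 is well defined, the objective values decrease, and $\{(y^j,S^j)\}$ remains bounded and converges to $(\overline y,\overline S)$. In the local stage, once the iterates are near $(\overline y,\overline S)$, the uniform nonsingularity together with the strong semismoothness of $\nabla\Psi$ and the inexactness tolerance $\|\nabla\Psi(y^j,S^j)\|^{1+\beta}$ on the CG residual forces the unit step $\alpha_j=1$ to be eventually accepted and yields the order-$(1+\beta)$ estimate. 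The main obstacle is precisely the positive definiteness in (ii): the delicate point is that the two proximal-Jacobian blocks only contribute positive semidefinite forms, so the superlinear convergence hinges on the strictly positive contribution $\tfrac{\tau}{\sigma}{\cal I}$ coming from the proximal regularization term in the pALM subproblem, which is what ultimately guarantees solvability of the Newton system and the descent property.
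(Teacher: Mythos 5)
Your proposal is correct and follows essentially the same route as the paper, which proves the theorem in one line by asserting that $\hat{\partial}^2\Psi(\cdot,\cdot)$ is positive definite and then invoking the semismooth Newton convergence theory of \cite{zhao2010newton}. Your explicit verification of the uniform lower bound $\tfrac{\tau}{\sigma}(\|d_y\|^2+\|d_S\|^2)$ on the quadratic form---with the two proximal-Jacobian blocks contributing only positive semidefinite terms---is exactly the detail the paper leaves implicit, and it is carried out correctly.
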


\section{Numerical experiments}
\label{sec: Numerical experiments}
In this section, we present some numerical experiments on both synthetic and real data to demonstrate the performance of the proposed model and the efficiency of the two-phase algorithm. In our algorithm, we fix the iteration number of the sGS-ADMM in Phase \uppercase\expandafter{\romannumeral1} to be $200$. As we discuss before, in some cases, the sGS-ADMM alone may not be efficient enough to solve the problems. To deal with these cases, we additionally apply the the pALM, which is more complicated to implement. Since up to our knowledge, there is no other existing algorithm in the literature which is suitable to solve \eqref{P} for large $n$, we compare our algorithm with the sGS-ADMM alone to demonstrate the efficiency and robustness of our two-phase algorithm. All experiments are implemented in {\sc Matlab} 2018b on a windows workstation (12-core, Intel Xeon E5-2680 @ 2.50GHz, 128 G RAM).

\subsection{Stopping criteria}
In our experiments, we measure the infeasibilities of the primal and dual problems by $R_{P},R_{D}$, and measure the complementarity condition by $R_{C}$, where
\begin{align*}
R_{P} :=& \frac{\|{\cal A}X-b\|}{1+\|b\|}, \quad R_{D} :=\frac{\|C-{\cal A}^*y-S-Z\|}{1+\|C\|},\\
R_{C} :=& \max\Big\{\frac{\|XZ-\mu I_n\|}{1+\|X\|+\|Z\|},\frac{\|X-{\rm Prox}_{Q}(X-S)\|}{1+\|X\|+\|S\|}
\Big\}.
\end{align*}
Note that in Phase \uppercase\expandafter{\romannumeral2}, the variable $Z$ could be constructed according to the derivation of the Lagrangian function as $(\phi_{\gamma}^+(M)-M)/\sigma$, where $M=X-\sigma(C-{\cal A}^*y-S)$. We stop the algorithm when
\begin{equation*}
\max\{R_{P},R_{D},R_{C}\} < {\tt Tol},
\end{equation*}
with ${\tt Tol} = 10^{-6}$ as the default. We also stop the algorithm if it reaches the maximum iteration number, $200$ for the pALM and $50000$ for the sGS-ADMM. Furthermore, we also use the relative gap to measure the quality of the solution, which is defined as
\begin{equation*}
R_{G} := \frac{|{\tt pobj}-{\tt dobj}|}{1+|{\tt pobj}|+|{\tt dobj}|},
\end{equation*}
where ${\tt pobj}$ and ${\tt dobj}$ are the primal and dual objective function values given by
\begin{align*}
{\tt pobj}&=\langle C,X\rangle - \mu\log \det (X) + Q(X),\quad {\tt dobj}=\langle b,y\rangle  + \mu\log \det (Z) +n\mu -n\mu \log\mu.
\end{align*}

\subsection{Experimental settings}
In each experiment, we are given $p$ samples $\{z_i\}_{i=1}^p$ with $z_i\in \mathbb{R}^n$, the sample covariance matrix $C$ is constructed as
\begin{align*}
C = \frac{1}{p}\sum_{i=1}^p(z_i-\bar{z})(z_i-\bar{z})^T,\quad \bar{z} = \frac{1}{p}\sum_{i=1}^p z_i.
\end{align*}
For the parameters in the model \eqref{P}, we take
\begin{align}
\mu = 1,\quad \lambda = k\rho /\bar{n}.\label{eq: parameter}
\end{align}
Thus in each experiment when estimating the Gaussian graphical model, we need to determine the approximate values of $\rho$ and $k$, which balance the sparsity and clustering structure. The constraint data ${\cal A}$ and $b$ is discussed individually in each experiment.

\subsection{Experiments on synthetic data}
In this subsection, we conduct experiments on artificial datasets on covariance selection and graph recovering. The first experiment is on the covariance selection problem where the true concentration matrix is constructed to have sparsity and clustering structure. Since our model can also be used to recover the graph structure, we also create several synthetic datasets based on different graph-based models constructed via the procedure in \cite{egilmez2017graph}. The sample size $p$ is fixed to be $10n$. We test the case when the constraint takes the form as the model \eqref{pattern_P}, that is, the sparsity pattern is partially known. The set ${\cal J}$ is generated following the idea in \cite{lu2009smooth} as
\begin{align*}
{\cal J}=\{(i,j)\mid (\Sigma^{-1})_{ij}=0,\ |i-j|\geq 5\},
\end{align*}
where $\Sigma^{-1}$ is the true concentration matrix. In order to measure the experimental performance, we adopt two metrics used in \cite{egilmez2017graph,kumar2020unified}. The first one is the relative error between $\Sigma^{-1}$ and $X^*$:
\begin{align*}
{\rm RE}:=\frac{\|X^*-\Sigma^{-1}\|}{\|\Sigma^{-1}\|}.
\end{align*}
The other one is the F-score metric:
\begin{align*}
{\rm FS}:=\frac{2{\rm tp}}{2{\rm tp}+{\rm fn}+{\rm fp}},
\end{align*}
where true positive (${\rm tp}$) stands for the case when the computed solution $X^*$ detects an edge correctly, false negative (${\rm fn}$) means that $X^*$ misses an edge and false positive (${\rm fp}$) stands for the case when $X^*$ detects an edge which should not be present. Note that a F-score value of $1$ means perfect recovery of the sparsity pattern of the concentration matrix.

\paragraph{Synthetic dataset \uppercase\expandafter{\romannumeral1}: covariance selection.} We first generate a $0$-$1$ matrix in $\mathbb{S}^n$ denoted as $P$, where $1$ represents the position of the non-zero elements of the concentration matrix. Since we focus on the sparse Gaussian graphical model with clustering structure, we generate $P$ according to the model $ {\cal P}(n,n_G,p^{\rm big},p^{\rm small},p^{\rm mid})$, where $n_G$ means the number of clusters of the coordinates, $p^{\rm big}$, $p^{\rm mid}$ are the probabilities of having an edge between the coordinates within and across the clusters, $p^{\rm small}$ is the probability of having edges between two different clusters. In the model, the number of coordinates in each cluster are randomly chosen. For simplicity, the coordinates are sorted according to the clusters. Based on the sparse pattern of $P$, we generate the random concentration matrix modified from the procedure in \cite{d2008first,wang2010solving}. Let $\Sigma^{-1}$ be a matrix which has the same sparsity structure as $P$, but uniformly distributed random entries on $[-1,1]$. To ensure that the positive definiteness of $\Sigma^{-1}$, we compute
\begin{align*}
\Sigma^{-1} = \Sigma^{-1}+I_n,\quad \Sigma^{-1} = \Sigma^{-1}+\max\{-1.2\min({\rm eig}(\Sigma^{-1})),0.001\}I_n.
\end{align*}
For each test problem, we sample $p=10n$ instances from the multivariate Gaussian distribution ${\cal N}(0,\Sigma)$, and fix $p^{\rm big}=0.8$, $p^{\rm small}=0.2$, $p^{\rm mid}=0.5$. In this experiment we fix $k=1$ in \eqref{eq: parameter}.

To visualize the estimation performance of our model, we refer to Figures \ref{fig: rand1_500_0001} and \ref{fig: rand1_500_0005}, which show the estimated result for the case when $(n,n_G)=(500,10)$ with two different parameters. In the figures, the input sparsity pattern shows the pattern of $C^{-1}$. As we can see in the figures, our model could recover the sparsity and clustering structure of the unknown concentration matrix with the small sample size of $p=10n$ in this experiment. Table \ref{table_rand1_acc} reports the relative errors and F-scores for different problem instances. The performance is satisfactory considering the small sample size and complicated structure. As one can observe from Figures \ref{fig: rand1_500_0001} and \ref{fig: rand1_500_0005}, the estimated sparsity pattern of the concentration matrix closely reflects the true sparsity pattern. Note that we are able to solve a very large instance with matrix dimension $n=4000$ and $3579004$ linear constraints in 18 minutes and 23 seconds. Generally, our proposed algorithm works quite well as shown in Table \ref{table_rand1_time}. As one can see, the test problems in this case are all solved to the desired accuracy by the sGS-ADMM in Phase \uppercase\expandafter{\romannumeral1}.

\begin{figure}[H]
	\centering
	\vspace{-0.2cm}
	\includegraphics[width=6.5in]{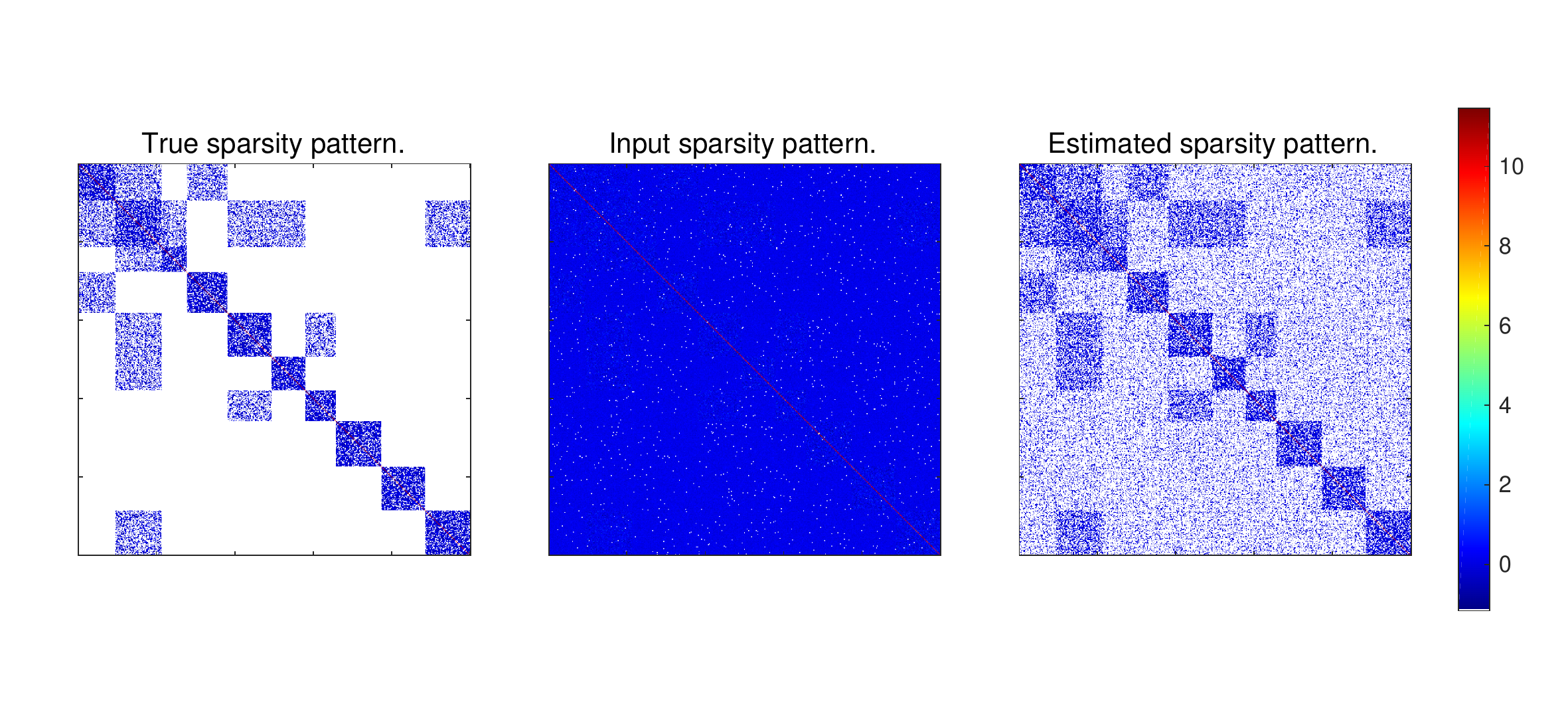}
	\vspace{-1.5cm}
	\caption{The estimated result of covariance selection for $(n,n_G)=(500,10)$ and $\rho=0.001$.}
	\label{fig: rand1_500_0001}
\end{figure}

\begin{figure}[H]
	\centering
	\vspace{-0.8cm}
	\includegraphics[width=6.5in]{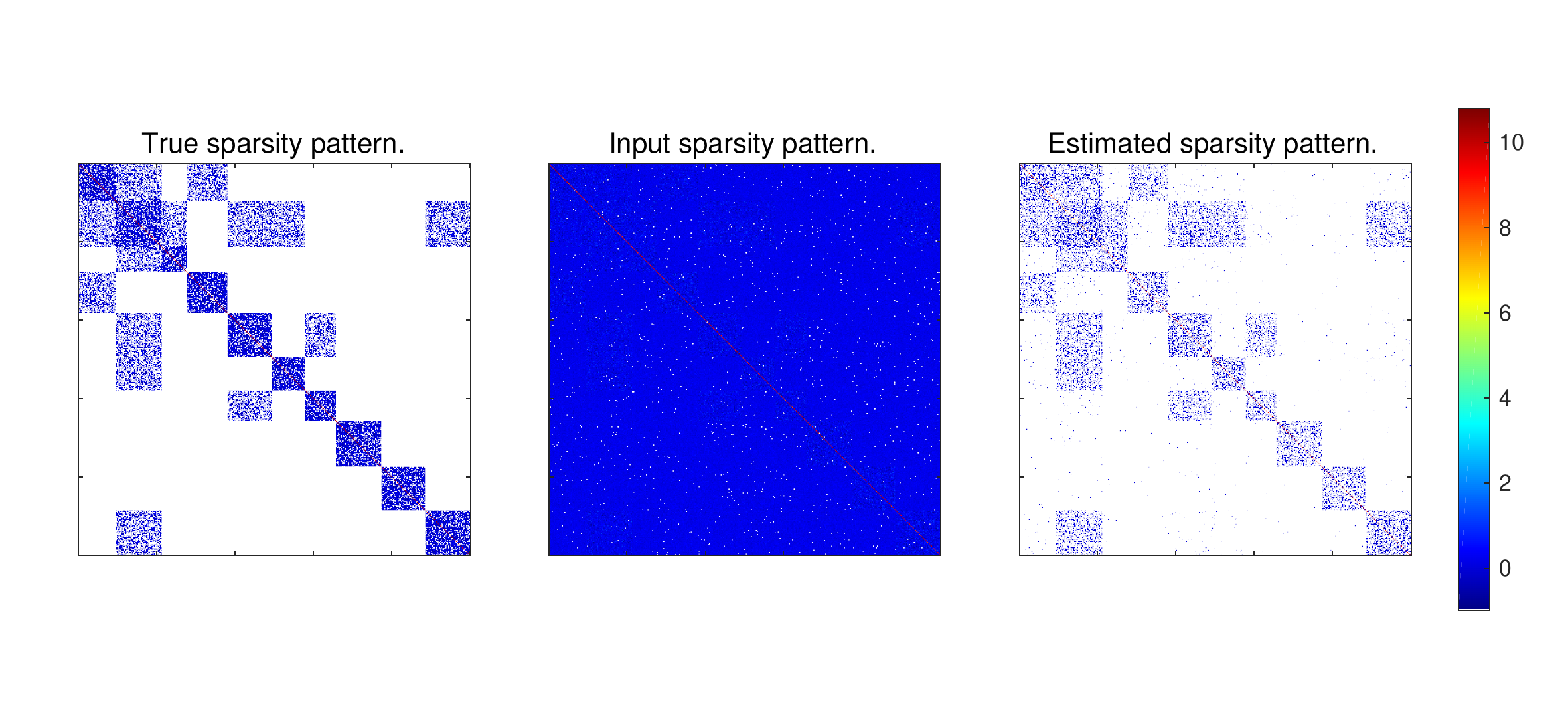}
	\vspace{-1.5cm}
	\caption{The estimated result of covariance selection for $(n,n_G)=(500,10)$ and $\rho=0.005$.}
	\label{fig: rand1_500_0005}
\end{figure}

{\small
	\begin{ThreePartTable}
		\renewcommand{\multirowsetup}{\centering}
		\renewcommand\arraystretch{1.5}
		\begin{longtable}{cccc}
			\caption{Performance of the proposed model for covariance selection.} \label{table_rand1_acc}\\[-6pt]
			\hline
			\hline
			$(n,n_G,m)$ &  $\rho$  & RE & FS  \\
			\hline
			\endfirsthead
			
			\multicolumn{4}{c}{{ \tablename\ \thetable{} -- continued from previous page}} \\
			
			\hline
			\hline
			$(n,n_G,m)$ &  $\rho$  & RE & FS  \\
			\hline
			\endhead
			
			\multicolumn{4}{r}{{Continued on next page}} \\
			\hline
			\hline
			\endfoot
			
			\hline
			\hline
			\endlastfoot	
			
			(500,10,52623) & 0.001 & 1.53e-1  & 5.06e-1\\
			\hline
			
			(500,10,52623) & 0.005 & 2.55e-1 & 6.16e-1\\
			\hline
			
			(1000,20,220141) & 0.001 & 1.41e-1 & 5.22e-1\\
			\hline
			
			(1000,20,220141) & 0.005 & 3.09e-1 & 4.81e-1\\
			\hline
			
			(2000,50,894283) & 0.001 & 1.69e-1 & 6.62e-1\\
			\hline
			
			(3000,50,2012956) & 0.001 & 2.12e-1 & 7.45e-1\\
			\hline
			
			(4000,50,3579004) & 0.001 & 2.56e-1 & 6.57e-1\\
		\end{longtable}
	\end{ThreePartTable}
}

{\small
	\begin{ThreePartTable}
		\renewcommand{\multirowsetup}{\centering}
		\renewcommand\arraystretch{1.5}
		\begin{longtable}{cc|cc|cc|c}
			\caption{Performance of our two-phase algorithm for solving covariance selection problems.} \label{table_rand1_time}\\[-6pt]
			\hline
			\hline
			$(n,n_G,m)$ &  $\rho$  & $\max\{R_{P},R_{D},R_{C}\}$   & $R_G$ & \multicolumn{2}{c|}{ Iteration}  & Time \\
			&&&& Phase \uppercase\expandafter{\romannumeral1} & Phase \uppercase\expandafter{\romannumeral2} & \\
			\hline
			\endfirsthead
			
			\multicolumn{7}{c}{{ \tablename\ \thetable{} -- continued from previous page}} \\
			
			\hline
			\hline
			$(n,n_G,m)$ &  $\rho$  & $\max\{R_{P},R_{D},R_{C}\}$   & $R_G$ & \multicolumn{2}{c|}{ Iteration}  & Time \\
			&&&& Phase \uppercase\expandafter{\romannumeral1} & Phase \uppercase\expandafter{\romannumeral2} & \\
			\hline
			\endhead
			
			\multicolumn{7}{r}{{Continued on next page}} \\
			\hline
			\hline
			\endfoot
			
			\hline
			\hline
			\endlastfoot	
			
			(500,10,52623) & 0.001 & 6.94e-7 & 3.64e-7 & 71 & -- & 00:00:07\\
			\hline
			
			(500,10,52623) & 0.005 & 6.67e-7 & 2.38e-7 & 62 & -- & 00:00:07\\
			\hline
			
			(1000,20,220141) & 0.001 & 8.52e-7 & 4.13e-7 & 75 & -- & 00:00:35\\
			\hline
			
			(1000,20,220141) & 0.005 & 9.57e-7 & 3.11e-7 & 65 & -- & 00:00:30\\
			\hline
			
			(2000,50,894283) & 0.001 & 6.68e-7 & 2.88e-7 & 84 & -- & 00:03:13\\
			\hline
			
			(3000,50,2012956) & 0.001 & 8.56e-7 & 3.14e-7 & 90 & -- & 00:08:25\\
			\hline
			
			(4000,50,3579004) & 0.001 & 8.70e-7 & 2.95e-7 & 96 & -- & 00:18:23\\
		\end{longtable}
	\end{ThreePartTable}
}

\paragraph{Synthetic dataset \uppercase\expandafter{\romannumeral2}: grid graph recovery.}
We consider a grid graph denoted as ${\cal G}_{\rm grid}(n)$, where $n=t^2$ is the number of nodes. In the grid graph, each node is attached to their four nearest neighbours except for the vertices at the boundary. The edge weights are randomly selected based on a uniform distribution from $[0.1,3]$. We sample $p=10n$ instances from the multivariate Gaussian distribution ${\cal N}(0,L^{\dagger})$, where $L$ is the associated Laplacian matrix of the graph. We fix $\rho=0.01$ and $k=2$ in \eqref{eq: parameter}.

Figures \ref{fig: randgrid_64} and \ref{fig:randgrid_64_com} show the estimation result of the grid graph recovery problem for the case $n=64$, where the visualization of the graphs are constructed via the corresponding adjacency matrices. Table \ref{table_randgrid_acc} displays the two metrics (RE and FS) of the estimated results obtained by our model. Note that the metrics for the case $n=64$ are comparable to the results in \cite{kumar2020unified}. As we can see from the figures, the estimated sparsity pattern of the grid graph closely matches the true pattern. In the visualization, a darker edge means that the corresponding $X^*_{ij}$ has a larger (in magnitude) negative value in the computed concentration matrix $X^*$. The numrical performance of the two-phase algorithm and the sGS-ADMM are reported in Table \ref{table_randgrid_time}. We can see from the table that for those test instances, the sGS-ADMM alone is not efficient enough to solve the problems to the desired accuracy, but the two-phase algorithm that uses a small number of sGS-ADMM iterations to warm-start the pALM is much more efficient.

\begin{figure}[H]
	\centering
	\vspace{-0.4cm}
	\includegraphics[width=6.5in]{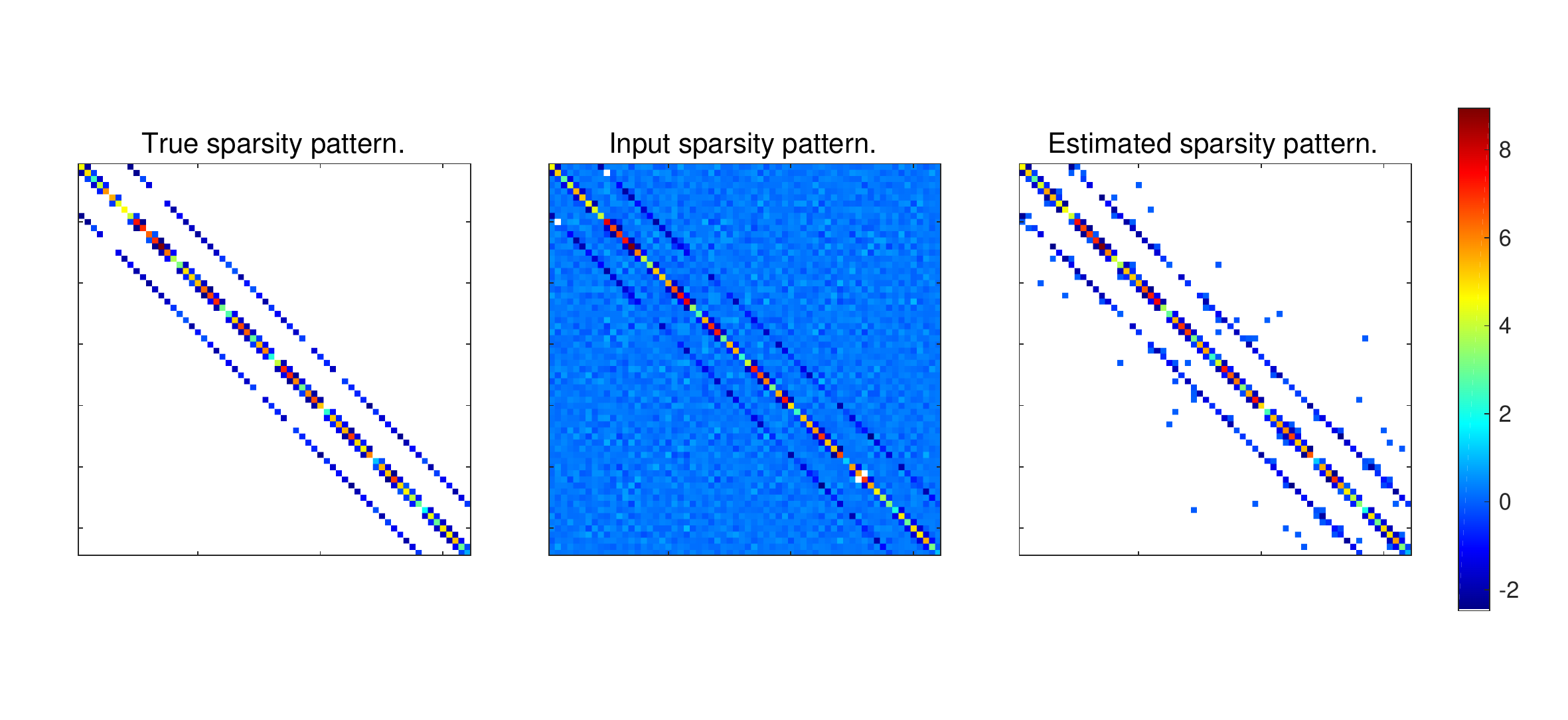}
	\vspace{-1.5cm}
	\caption{The estimated result of grid graph recovery for $n=64$.}
	\label{fig: randgrid_64}
\end{figure}

\begin{figure}[H]
	\vspace{-0.4cm}
	\subfigure[True grid graph.]{\label{fig:randgrid_64_org}
		\includegraphics[width=0.5\linewidth]{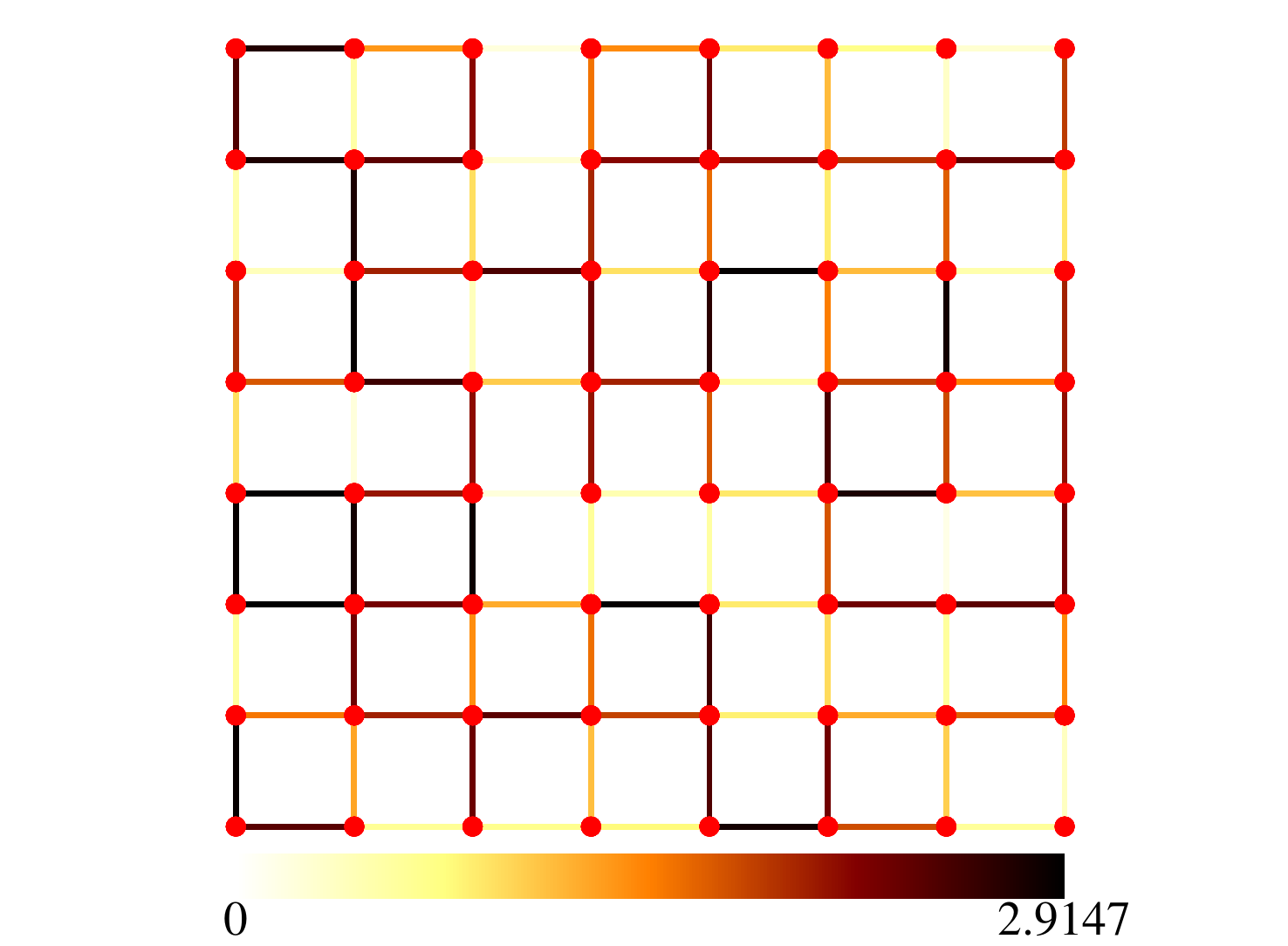}
		\hspace{-1.3cm}}
	\subfigure[Estimated grid graph.]{\label{fig:randgrid_64_recover}
		\includegraphics[width=0.5\linewidth]{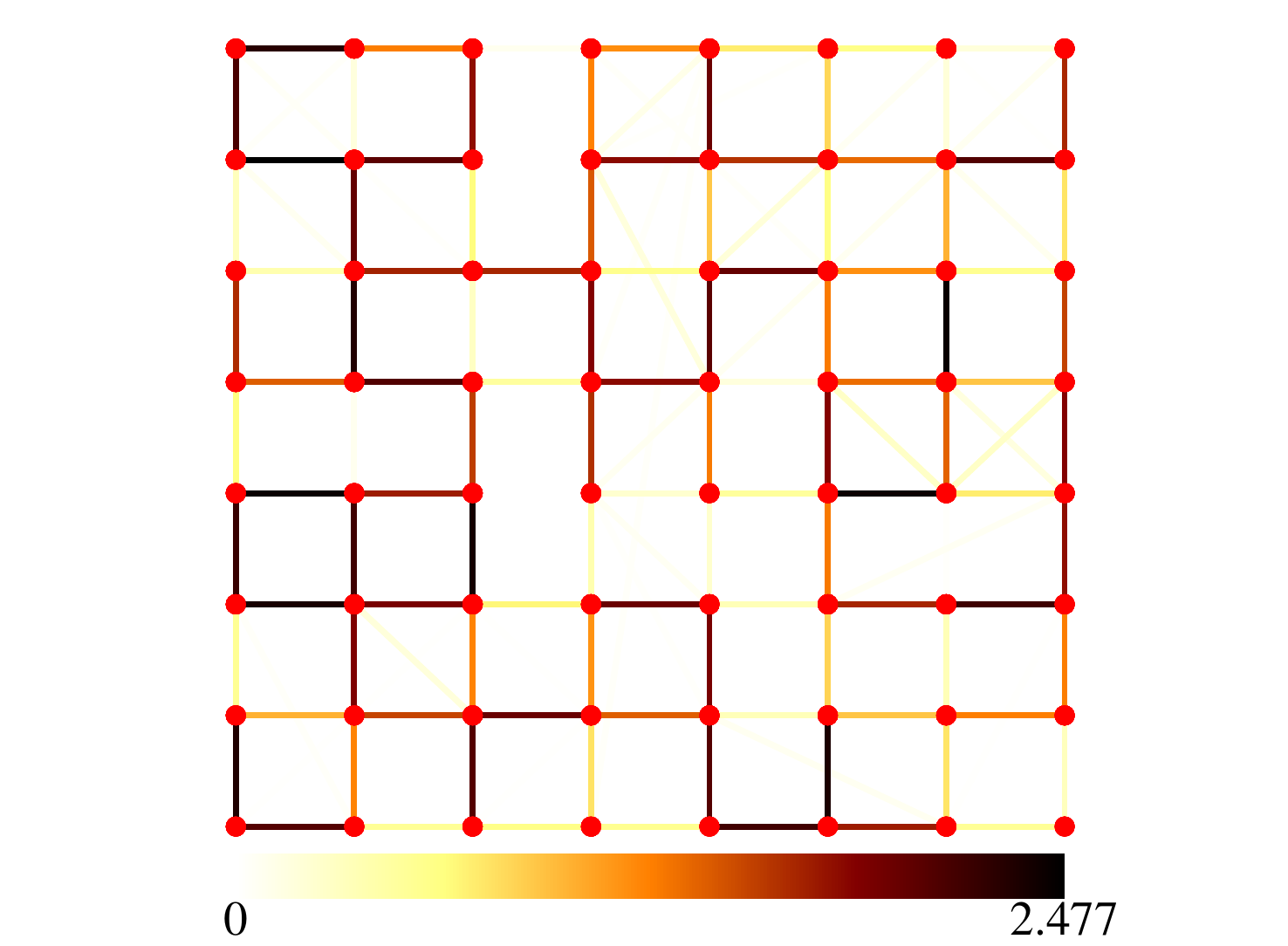}
		\hspace{-1.5cm}}	
	\caption{Visualization of the grid graph recovery for $n=64$.}
	\label{fig:randgrid_64_com}
\end{figure}

{\small
	\begin{ThreePartTable}
		\renewcommand{\multirowsetup}{\centering}
		\renewcommand\arraystretch{1.5}
		\begin{longtable}{ccc}
			\caption{Performance of the proposed model for grid graph recovery.} \label{table_randgrid_acc}\\[-6pt]
			\hline
			\hline
			$(n,m)$ & RE & FS  \\
			\hline
			\endfirsthead
			
			\multicolumn{3}{c}{{ \tablename\ \thetable{} -- continued from previous page}} \\
			
			\hline
			\hline
			$(n,m)$  & RE & FS  \\
			\hline
			\endhead
			
			\multicolumn{3}{r}{{Continued on next page}} \\
			\hline
			\hline
			\endfoot
			
			\hline
			\hline
			\endlastfoot	
			
			(64,952) & 1.47e-1  & 8.43e-1\\
			\hline
			
			(400,39520)  & 1.36e-1 & 6.60e-1\\
			\hline
			
			(900,201405)  & 4.90e-2 & 2.82e-1\\
			\hline
			
			(1600,638040)  & 4.38e-2 & 2.56e-1\\
		\end{longtable}
	\end{ThreePartTable}
}

{\small
	\begin{ThreePartTable}
		\begin{TableNotes}
			\item[\bfseries {\footnotesize Note:} ]  {\footnotesize ``T'' means the two-phase algorithm, ``S" means the sGS-ADMM. ``T(\uppercase\expandafter{\romannumeral1} )'' denotes the iteration number in Phase \uppercase\expandafter{\romannumeral1} and ``4(40)" in ``T(\uppercase\expandafter{\romannumeral2} )'' means ``the pALM iterations (the total inner SSN iterations)".}
		\end{TableNotes}
		\renewcommand{\multirowsetup}{\centering}
		\renewcommand\arraystretch{1.5}
		\begin{longtable}{c|cccc|ccc|cc}
			\caption{Performance of the algorithms for grid graph recovery.} \label{table_randgrid_time}\\[-6pt]
			\hline
			\hline
			$(n,m)$ &  \multicolumn{2}{c}{$\max\{R_{P},R_{D},R_{C}\}$} & \multicolumn{2}{c|}{$R_G$}   & \multicolumn{3}{c|}{ Iteration}  & \multicolumn{2}{c}{Time} \\
            & T & S & T & S & T(\uppercase\expandafter{\romannumeral1} ) & T(\uppercase\expandafter{\romannumeral2} ) & S  & T & S\\
			\hline
			\endfirsthead
			
			\multicolumn{10}{c}{{ \tablename\ \thetable{} -- continued from previous page}} \\
			
			\hline
			\hline
			$(n,m)$ &  \multicolumn{2}{c}{$\max\{R_{P},R_{D},R_{C}\}$} & \multicolumn{2}{c|}{$R_G$}   & \multicolumn{3}{c|}{ Iteration}  & \multicolumn{2}{c}{Time} \\
			& T & S & T & S & T(\uppercase\expandafter{\romannumeral1} ) & T(\uppercase\expandafter{\romannumeral2} ) & S  & T & S\\
			\hline
			\endhead
			
			\multicolumn{10}{r}{{Continued on next page}} \\
			\hline
			\hline
			\endfoot
			
			\hline
			\hline
			\insertTableNotes
			\endlastfoot	
			
			(64,952) & 8.20e-7 & 9.99e-7 & 4.86e-7 & 6.56e-9 & 200 & 4(40) & 4370 & 00:00:02  & 00:00:21\\
			\hline
			
			(400,39520) & 9.59e-7 & 9.99e-7 & 5.28e-6 & 8.50e-8 & 200 & 8(142) & 13868 & 00:01:32  & 00:16:15\\
			\hline
			
			(900,201405) & 6.98e-7 & 9.99e-7 & 7.29e-7 & 1.53e-9 & 200 & 16(300) & 21952 & 00:15:01  & 02:21:45\\
			\hline
			
			(1600,638040) & 9.97e-7 & 9.99e-7 & 1.36e-6 & 1.54e-8 & 200 & 18(342) & 23900 & 01:09:22  & 09:19:00\\
		\end{longtable}
	\end{ThreePartTable}
}

\paragraph{Synthetic dataset \uppercase\expandafter{\romannumeral3}: modular graph recovery.}
We generate a modular graph (also known as a stochastic block graph) ${\cal G}_{\rm modular}(n,n_G,p_1,p_2)$ with $n$ vertices and $n_G$ modules where the vertex attachment probabilities across modules and within modules are $p_1$ and $p_2$, respectively. We take $p_1=0.01$, $p_2=0.3$. The edge weights are randomly selected based on a uniform distribution from $[0.1,3]$. We sample $p=10n$ instances from the multivariate Gaussian distribution ${\cal N}(0,L^{\dagger})$, where $L$ is the associated Laplacian matrix of the graph. We fix $\rho=0.01$ and $k=1$ in \eqref{eq: parameter}.

Figures \ref{fig:randmodular_64} and \ref{fig:randmodular_64_com} show the visualization result of the estimation for the case $(n,n_G)=(64,4)$. Observe that we can get a good estimation of the sparsity pattern and clustering structure. The two metrics for evaluating the performance of the proposed model on various instances of modular graphs are reported in Table \ref{table_randmodular_acc} and the corresponding numerical performance of the two-phase algorithm and the sGS-ADMM is presented in Table \ref{table_randmodular_time}. By comparing the case for $(n,n_G)=(64,4)$ with the result in \cite{kumar2020unified}, our estimation result is a little better in in terms of the F-score.

\begin{figure}[H]
	\centering
	\vspace{-0.2cm}
	\includegraphics[width=6.5in]{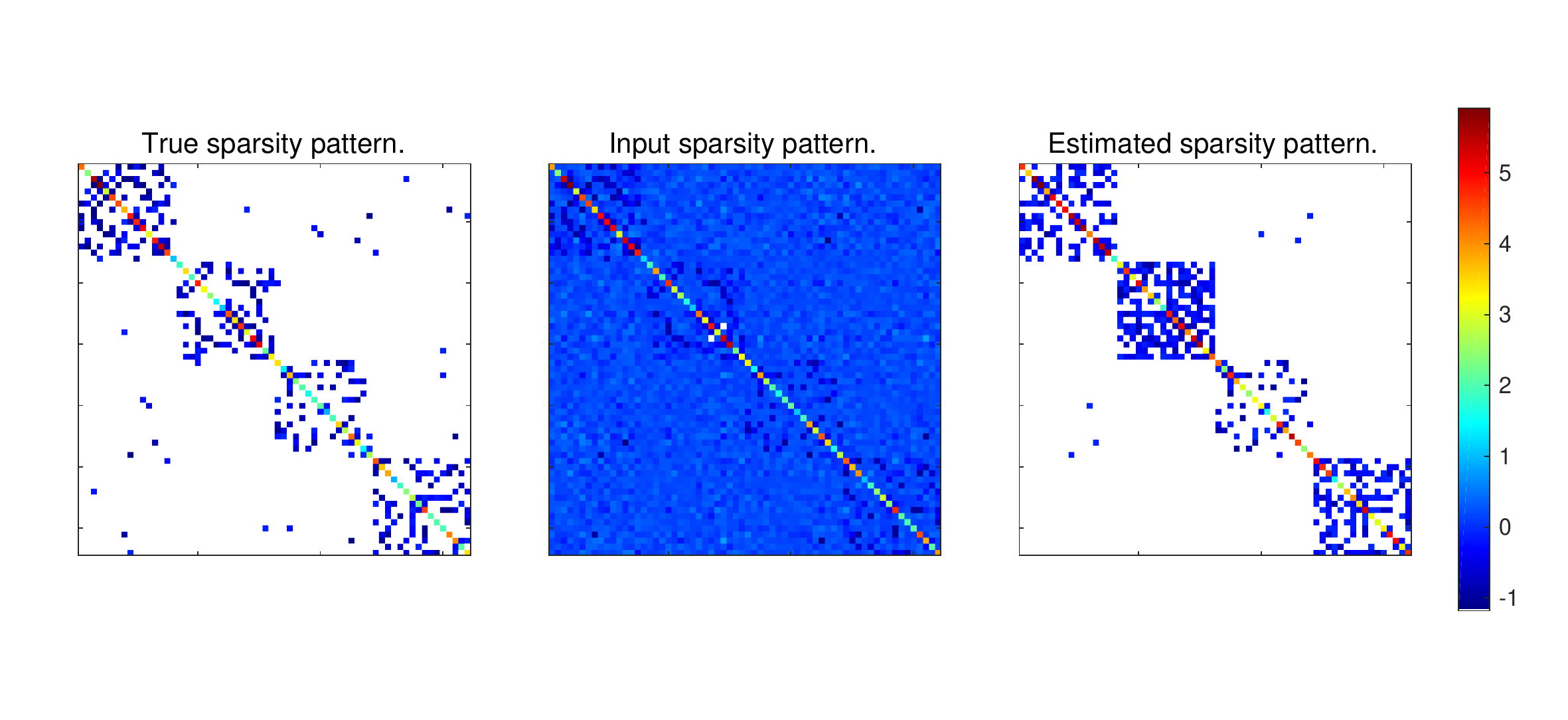}
	\vspace{-1.5cm}
	\caption{The estimated result of modular graph recovery for $(n,n_G)=(64,4)$.}
	\label{fig:randmodular_64}
\end{figure}

\begin{figure}[H]
	\subfigure[True modular graph.]{\label{fig:randmodular_64_org}
		\includegraphics[width=0.5\linewidth]{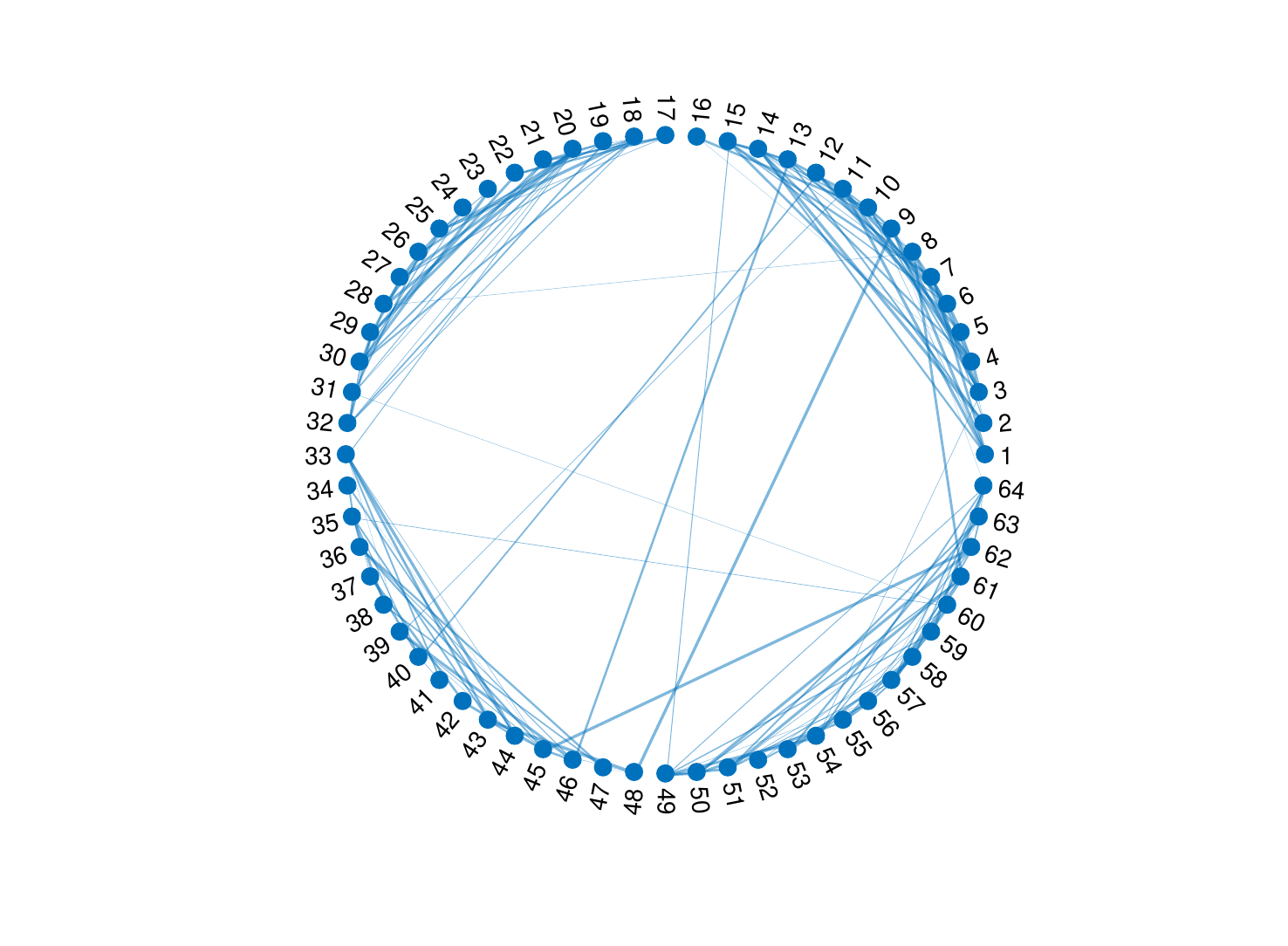}
		\hspace{-1.5cm}}
	\subfigure[Estimated modular graph.]{\label{fig:randmodular_64_recover}
		\includegraphics[width=0.5\linewidth]{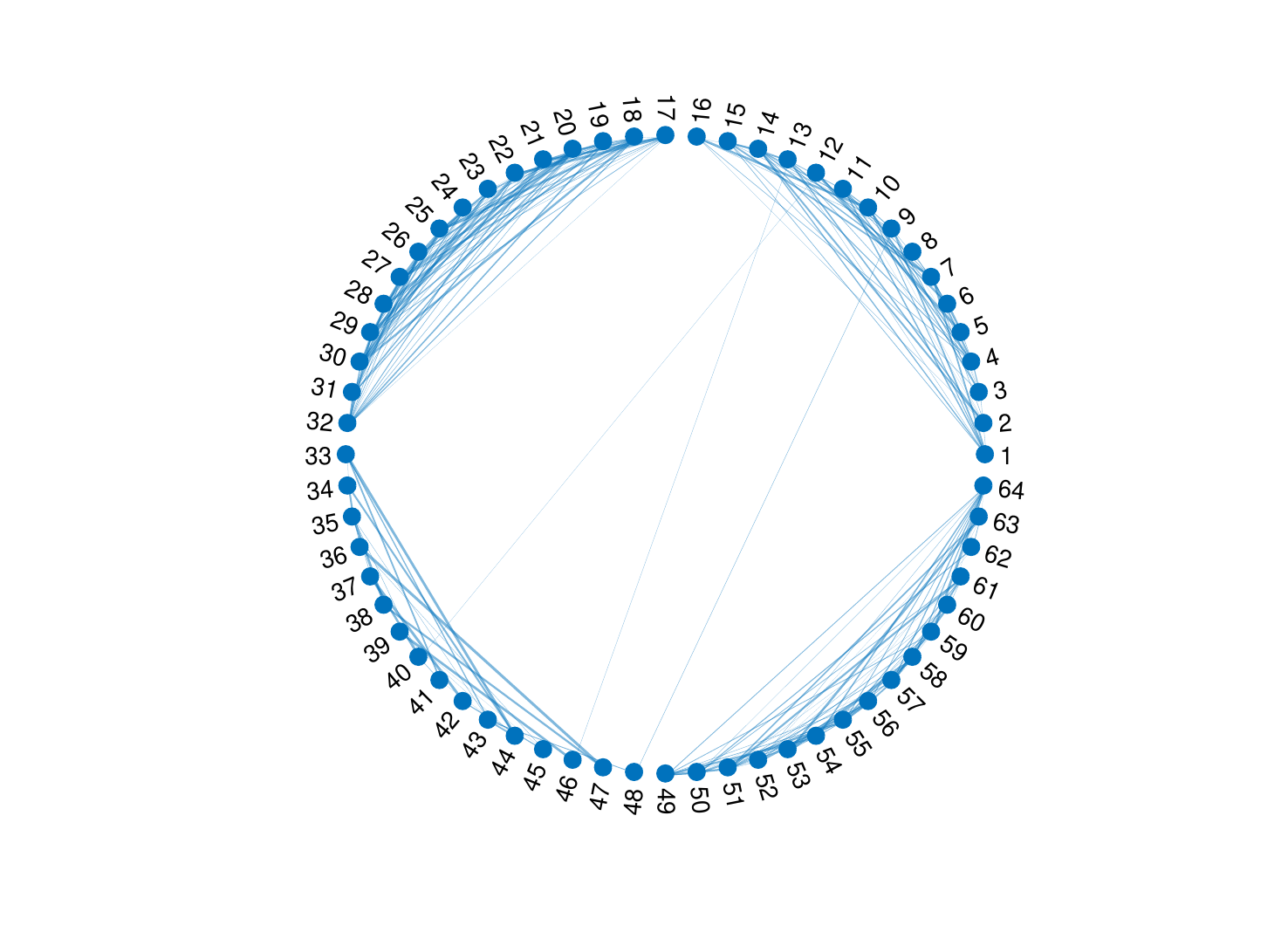}
		\hspace{-1.5cm}}	
	\caption{Visualization of the modular graph recovery for $(n,n_G)=(64,4)$.}
	\label{fig:randmodular_64_com}
\end{figure}

{\small
	\begin{ThreePartTable}
		\renewcommand{\multirowsetup}{\centering}
		\renewcommand\arraystretch{1.5}
		\begin{longtable}{ccc}
			\caption{Performance of the proposed model for modular graph recovery.} \label{table_randmodular_acc}\\[-6pt]
			\hline
			\hline
			$(n,n_G,m)$ & RE & FS  \\
			\hline
			\endfirsthead
			
			\multicolumn{3}{c}{{ \tablename\ \thetable{} -- continued from previous page}} \\
			
			\hline
			\hline
			$(n,n_G,m)$  & RE & FS  \\
			\hline
			\endhead
			
			\multicolumn{3}{r}{{Continued on next page}} \\
			\hline
			\hline
			\endfoot
			
			\hline
			\hline
			\endlastfoot	
			
			(64,4,928) & 1.55e-1  & 8.32e-1\\
			\hline
			
			(400,40,39249)  & 8.98e-2 & 4.76e-1\\
			\hline
			
			(800,80,157620)  & 7.89e-2 & 4.21e-1\\
			\hline
			
			(1000,100,246558)  & 8.40e-2 & 5.58e-1\\
		\end{longtable}
	\end{ThreePartTable}
}

{\small
	\begin{ThreePartTable}
		\renewcommand{\multirowsetup}{\centering}
		\renewcommand\arraystretch{1.5}
		\begin{longtable}{c|cccc|ccc|cc}
			\caption{Performance of the algorithms for modular graph recovery.} \label{table_randmodular_time}\\[-6pt]
			\hline
			\hline
			$(n,n_G,m)$ &  \multicolumn{2}{c}{$\max\{R_{P},R_{D},R_{C}\}$} & \multicolumn{2}{c|}{$R_G$}   & \multicolumn{3}{c|}{ Iteration}  & \multicolumn{2}{c}{Time} \\
			& T & S & T & S & T(\uppercase\expandafter{\romannumeral1} ) & T(\uppercase\expandafter{\romannumeral2} ) & S  & T & S\\
			\hline
			\endfirsthead
			
			\multicolumn{10}{c}{{ \tablename\ \thetable{} -- continued from previous page}} \\
			
			\hline
			\hline
			$(n,n_G,m)$ &  \multicolumn{2}{c}{$\max\{R_{P},R_{D},R_{C}\}$} & \multicolumn{2}{c|}{$R_G$}   & \multicolumn{3}{c|}{ Iteration}  & \multicolumn{2}{c}{Time} \\
			& T & S & T & S & T(\uppercase\expandafter{\romannumeral1}) & T(\uppercase\expandafter{\romannumeral2}) & S  & T & S\\
			\hline
			\endhead
			
			\multicolumn{10}{r}{{Continued on next page}} \\
			\hline
			\hline
			\endfoot
			
			\hline
			\hline
			\endlastfoot	
			
			(64,4,928) & 4.58e-7 & 9.98e-7 & 1.83e-7 & 4.03e-7 & 200 & 3(31) & 2789 & 00:00:01  & 00:00:15\\
			\hline
			
			(400,40,39249) & 4.36e-7 & 9.99e-7 & 1.35e-8 & 3.78e-9 & 200 & 10(190) & 6556 & 00:01:40  & 00:07:39\\
			\hline
			
			(800,80,157620)  & 6.29e-7 & 1.00e-6 & 1.73e-9 & 3.39e-9 & 200 & 11(199) & 3674 & 00:07:10  & 00:18:34\\
			\hline
			
			(1000,100,246558) & 8.68e-7 & 1.00e-6 & 4.58e-9 & 4.33e-8 & 200 & 8(152) & 1843 & 00:08:50  & 00:14:46\\
		\end{longtable}
	\end{ThreePartTable}
}
\subsection{Experiments on real data}
In this subsection, we apply our proposed model on some real data to see how it works on estimating the Gaussian graphical model with sparsity and clustering structure. The visualization is constructed using the software {\tt spectralGraphTopology} \footnote{\tt https://CRAN.R-project.org/package=spectralGraphTopology}.

\paragraph{Real data \uppercase\expandafter{\romannumeral1}: Animals dataset.} We use the Animals
dataset \cite{kemp2008discovery,egilmez2017graph,kumar2020unified} to learn a weighted graph by our model. In the graph, vertices denote animals and edge weights represent the similarities between them. The dataset consists of binary values which are answers to $p=102$ questions for $n=33$ animals. Since the data is the categorical (non-Gaussian) data, we follow the idea in \cite{egilmez2017graph} to compute the input matrix $C$ as summation of the sample covariance matrix and the identity matrix scaled by $1/3$. We aim to find the similarities among the animals. Since the conditional independence pattern is unknown in this real application, we apply the unconstrained model \eqref{uncon_P}. We take $\rho=0.05$ and $k=2$ in \eqref{eq: parameter}. The problem is solved by our two-phase algorithm within $1$ second. The visualization of the estimated graph is presented in Figure \ref{fig:animal}. One can see that the animals are clustered into various meaningful groups. For example, the cluster of animals consisting of Horse, Elephant, etc, are large herbivorous mammals while the cluster of animals consisting of Tiger, Lion, etc, are carnivorous mammals.

\begin{figure}[H]
	\centering
	\vspace{-1.5cm}
	\includegraphics[width=6in]{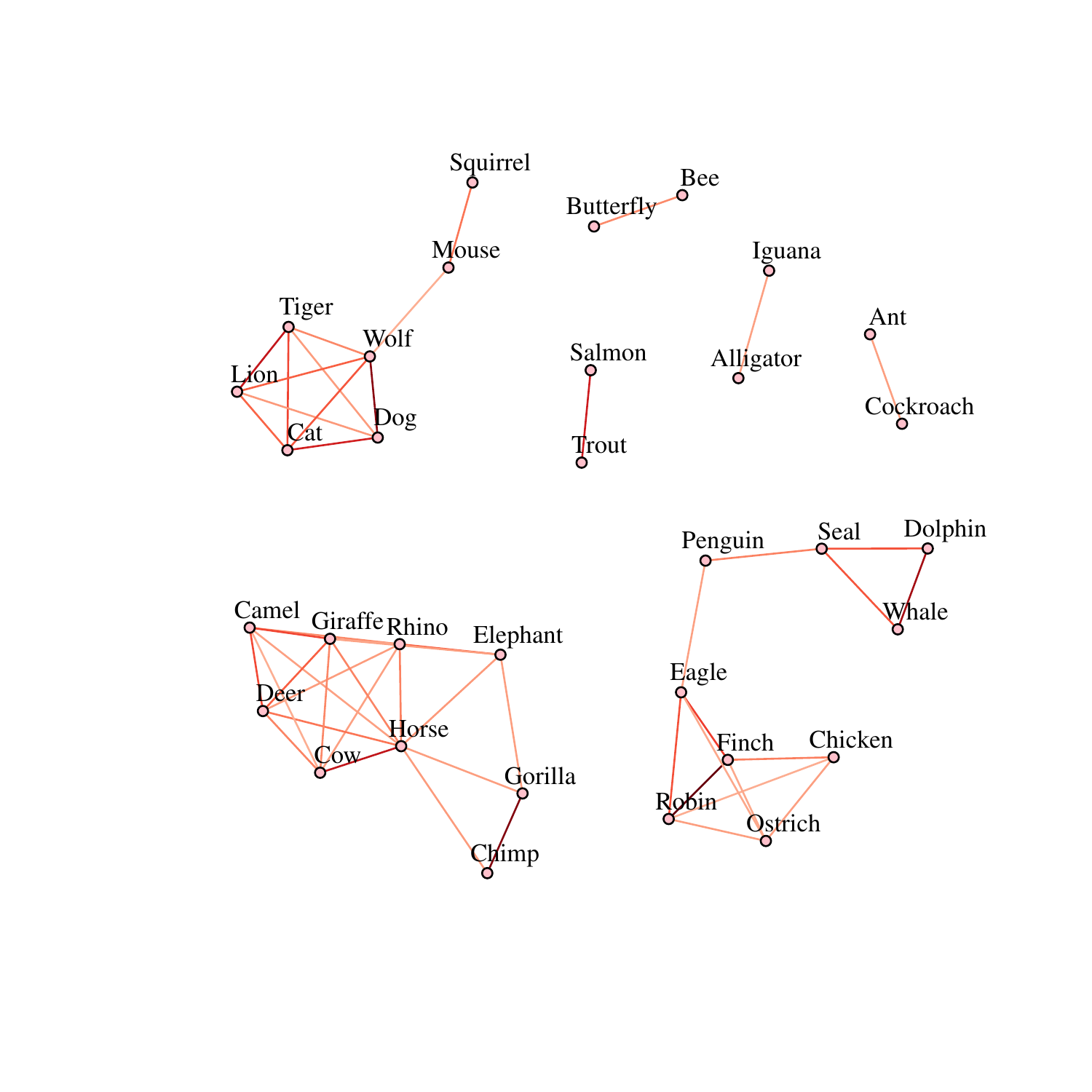}
	\vspace{-2.5cm}
	\caption{Visualization of the estimated result for the Animals dataset.}
	\label{fig:animal}
\end{figure}

\paragraph{Real data \uppercase\expandafter{\romannumeral2}: Cancer genome dataset.} We consider the RNA-Seq Cancer Genome Atlas Research Network \cite{weinstein2013cancer,kumar2020unified}. In the dataset, there are $n=801$ labeled samples, and each of them has $p=20531$ features. The dataset consists of five types of cancer, which are labeled with colors in the figure: black, blue, red, violet and green, respectively. Our goal is to cluster the samples based on the given features assuming that we do not known the true labels. We apply the unconstrained model \eqref{uncon_P} and take $\rho=0.1$, $k=2$ in \eqref{eq: parameter}. The problem is solved by the two-phase algorithm in $48$ seconds. Figure \ref{fig:cancer} presents the visualization of the estimated result. One can see that the samples are clustered into five groups except for about thirty isolated samples. The clustering result is consistent with the label information and the samples in different groups are completely separated.

\begin{figure}[H]
	\centering
	\vspace{-1.5cm}
	\includegraphics[width=6in]{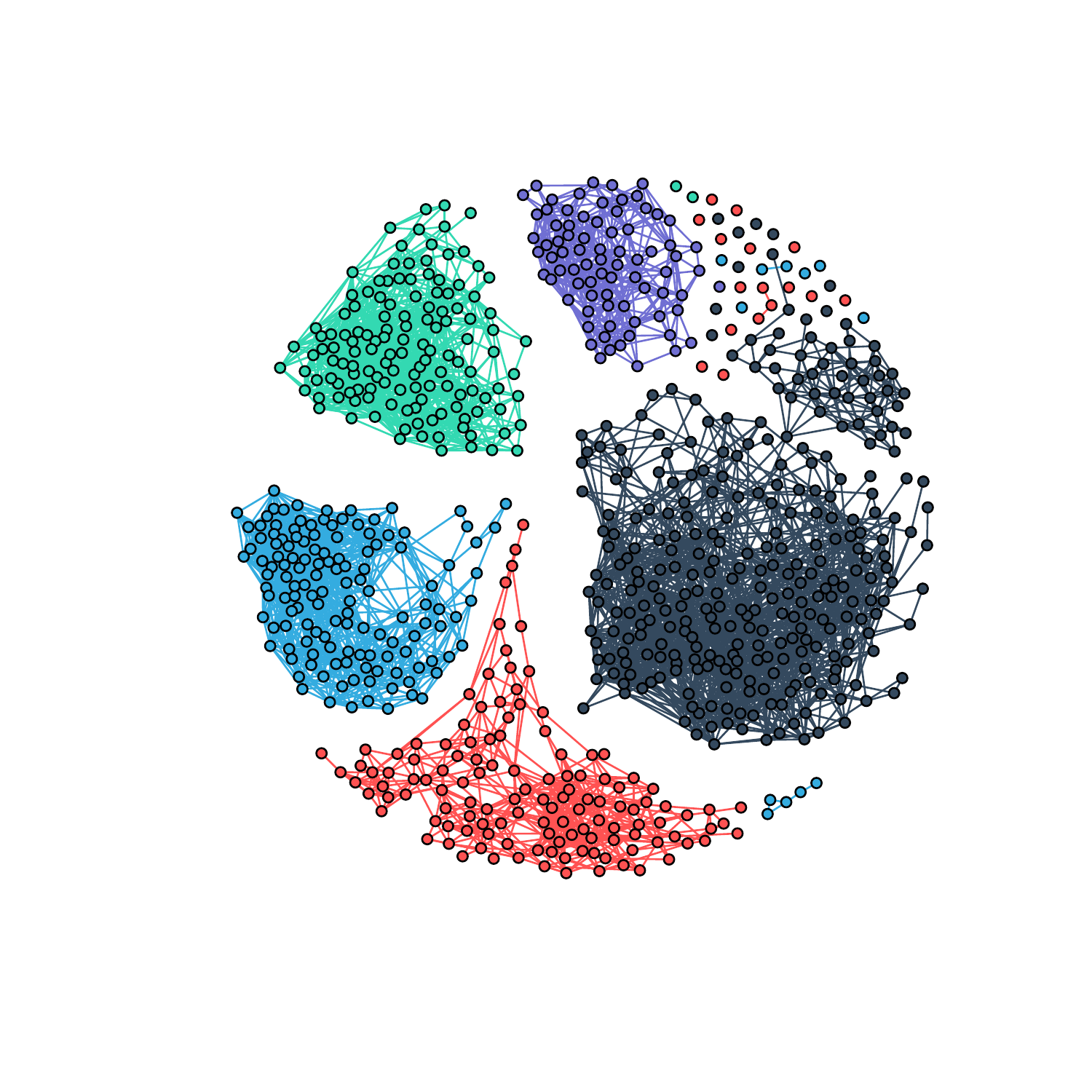}
	\vspace{-2.5cm}
	\caption{Visualization of the estimated result for the Cancer genome dataset.}
	\label{fig:cancer}
\end{figure}

\paragraph{Real data \uppercase\expandafter{\romannumeral3}: Zoo dataset.} We consider the Zoo dataset from the UCI Machine Learning Repository, which contains $n=100$ animals and each animal has $p=17$ Boolean-valued attributes. The dataset contains seven types of animals which are known.
To be specific, the set contains $41$ kinds of mammals, $20$ kinds of birds, $5$ kinds of reptiles, $13$ kinds of fish, $3$ kinds of amphibians, $8$ kinds of bugs and $10$ invertebrates. Each type is labeled in the figure by a different color: black, violet, red, green, blue, yellow and pink, respectively. Since the data is the categorical data, we use the same technique as the case for the Animals dataset, that is, computing the input matrix $C$ as summation of the sample covariance matrix and the identity matrix scaled by $1/3$. In the experiment we take $\rho=0.05$ and $k=2$ in \eqref{eq: parameter}. The problem is solved by the two-phase algorithm within $1$ second. We compare the clustering result of the model \eqref{uncon_P} with the true groups in Figure \ref{fig:zoo}. As one can see, the animals belonging to each group are clustered together except for the reptiles. Due to the small sample size in this dataset, there exist some wrong connections across different clusters, which are indicated by the grey colored edges in the figure. Some of the wrong connections are consistent with our usual expectation. For example, there exists an edge between platypus and penguin since they are both vertebrate warm blooded animals that lay eggs. Note that the animals belonging to the relatively large groups: mammals, birds and fish, are clearly separated. In addition, the cluster consisting of mammals is further divided into three sub groups: the carnivorous mammals like lion, the large herbivorous mammals like elephant, and the small herbivorous mammals like squirrel.

\begin{figure}[H]
	\centering
	\vspace{-1.5cm}
    \hspace{-1.5cm}
	\includegraphics[width=7in]{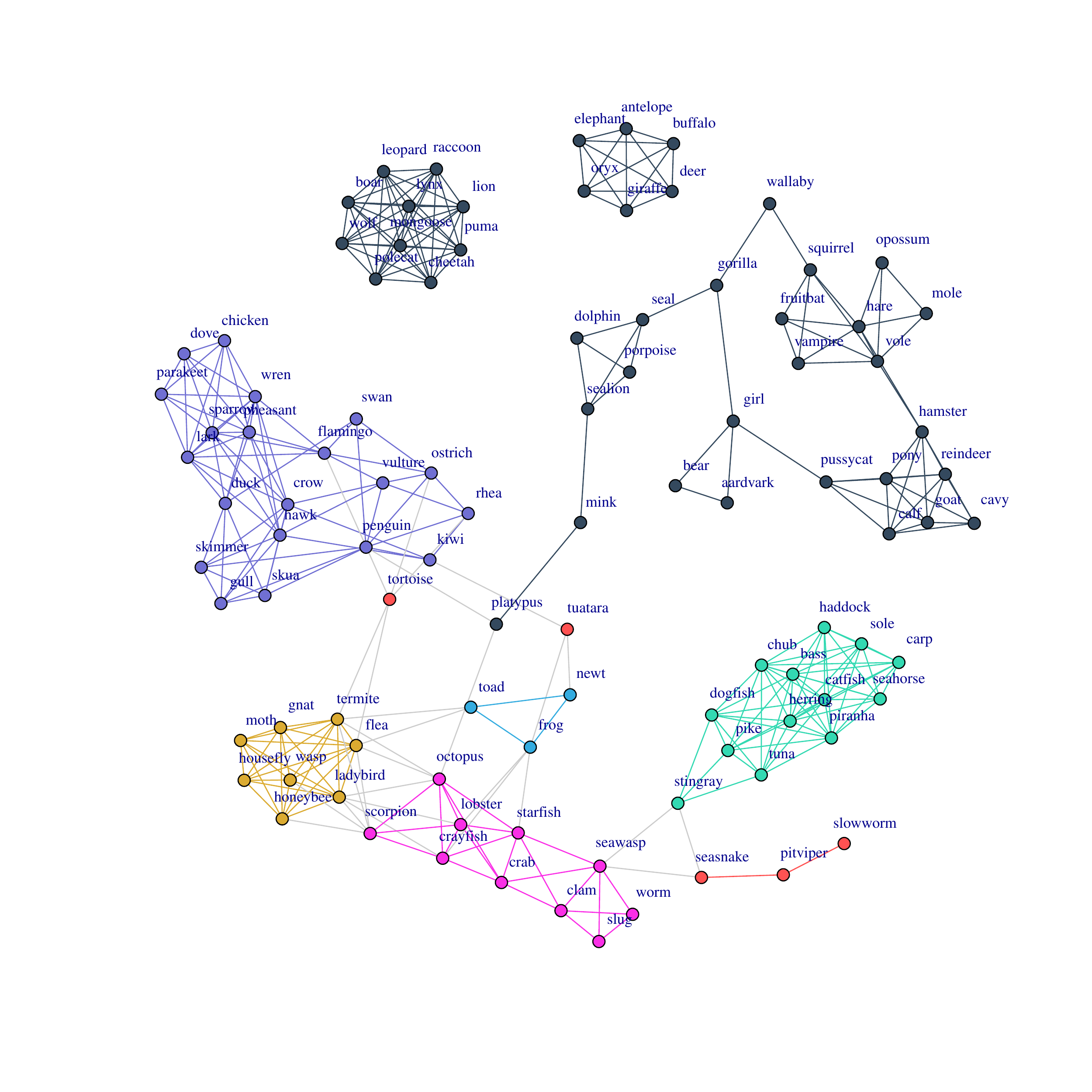}
	\vspace{-2.5cm}
	\caption{Visualization of the estimated result for the Zoo dataset.}
	\label{fig:zoo}
\end{figure}

\section{Conclusion}
\label{sec: conclusion}
In this paper, we propose a new model to learn the sparsity and hidden clustering structure in a Gaussian graphical model. In addition, we design an efficient two-phase algorithm to solve the underlying large scale convex optimization to high accuracy. Specifically,  we design the sGS-ADMM in the first phase to generate an initial point to warm-start the second phase of the pALM, where each of its subproblems is solved by the semismooth Newton method. Numerical experiments on both synthetic data and real data have demonstrated the good performance of our model, and the efficiency and robustness of our proposed algorithm.

%\bibliography{CovSel-clulasso.bib}{}
%\bibliographystyle{siam}

\end{document}